\newcommand{\mathsym}[1]{{}}
\newcommand{\unicode}[1]{{}}
\newcommand{\R}{\ensuremath{\mathbb{R}}}
\newcommand{\CO}{\ensuremath{\mathcal{O}}}
\newcommand{\CH}{\ensuremath{\mathcal{H}}}
\newcommand{\ov}{\overline}
\newcommand{\p}{\partial}
\newcommand{\dive}{\textrm{div\,}}
\newcommand{\de}{\delta}
\newcommand{\e}{\varepsilon}
\newcommand{\sgn}{\mathrm{sign}}
\newtheorem {proposition} {Proposition}
\newtheorem {lemma} {Lemma}
\newtheorem {remark}{Remark}
\newtheorem {mtheorem} {Theorem}
\def\R{\mathbb R}
\title[Crossing limit cycles in piecewise polynomial vector fields]
{Growth estimate for the number of crossing limit cycles\\ in planar piecewise polynomial vector fields}
\author[Luana Ascoli and D. D. Novaes]
{Luana Ascoli and Douglas D. Novaes}
\address{Departamento de Matem\'{a}tica - Instituto de Matem\'{a}tica, Estat\'{i}stica e Computa\c{c}\~{a}o Cient\'{i}fica (IMECC) - Universidade
Estadual de Campinas (UNICAMP), \ Rua S\'{e}rgio Buarque de Holanda, 651, Cidade Universit\'{a}ria Zeferino Vaz, 13083-859, Campinas, SP,
Brazil}
\email{ddnovaes@unicamp.br}
\email{luanaascoli@ime.unicamp.br}
\begin{document}

\subjclass[2010]{34A36, 37G15, 34C25, 34C07}

\keywords{crossing limit cycles, planar piecewise polynomial vector fields, piecewise polynomial Hamiltonian vector fields, Hilbert number}

\maketitle
\begin{abstract} 
Motivated by the classical Hilbert's Sixteenth Problem, we extend some main developments obtained for Hilbert's number in the polynomial setting to the piecewise polynomial context. Specifically, we study the growth of the maximum number of crossing limit cycles in planar piecewise polynomial vector fields of degree $n$, denoted by $\CH_c(n)$. The best previously known general lower bound is $\CH_c(n)\geq 2n - 1$. In this work, we show that $\CH_c(n)$ grows at least as fast as $n^2/4.$ Furthermore, we prove that $\CH_c(n)$ is strictly increasing whenever it is finite, and that in such cases this maximum can be realized by piecewise polynomial systems whose crossing limit cycles are all hyperbolic. Finally, for the more restrictive class of piecewise polynomial Hamiltonian vector fields, we adapt the recursive construction of Christopher and Lloyd to demonstrate that the corresponding maximal number of crossing limit cycles, denoted by $\widehat{\CH}_c(n)$, grows at least as fast as $n\log n/(2\log 2)$, thereby improving previously established linear growth estimate.
\end{abstract}

	\section{Introduction}
Since its formulation in 1900, Hilbert’s Sixteenth Problem has played a key role in driving advances in the theory of planar vector fields. It concerns bounding the amount of limit cycles that may arise in planar polynomial vector fields of a given degree. When such a bound exists for a degree $n$, it is referred to as the $n$th Hilbert number $\CH(n)$. More specifically, the $n$th Hilbert number is defined by
\[
\CH(n):=\sup\{\pi(P,Q): \deg(P)\le n,\ \deg(Q)\le n\},
\]
with $\pi(P,Q)$ being the number of limit cycles of the polynomial vector field $X=(P,Q)$. Accordingly, the problem reduces to showing that $\CH(n)$ is finite for each $n\in\mathbb{N}$.

Except for the linear case ($n=1$), where limit cycles cannot exist, determining the finiteness of $\CH(n)$ has proven to be a highly challenging problem. Indeed, the question whether $\CH(n)$ is finite remains open already for $n = 2$. The main existing results concern lower bounds for the Hilbert number. For instance,  $\CH(2)\geq 4$ \cite{CW79,1980A}, $\CH(3)\geq 13$ \cite{LLY09}, and $\CH(4)\geq 28$ \cite{PT19}. Lower bounds for several other small values of $n$ are reported in \cite{PT19}. The standard approach to improve these bounds, for a fixed degree $n$, is to construct explicit examples exhibiting more limit cycles than previously known. For arbitrary values of $n$, the challenge is to generate limit cycles within families of planar polynomial vector fields in which the degree is treated as a parameter. In the early stages of research, a quadratic lower bound for the growth of $\CH(n)$ was established (see Otrokov \cite{O54}, Il’yashenko \cite{I91}, and Basarab-Horwath and Lloyd \cite{BL88}). Later, Christopher and Lloyd \cite{CL95} proved a lower bound of order $n^{2}\log n $ for the growth of $\CH(n)$, a result that has since been revisited and refined by several authors (see, for instance, \cite{HL12,IS00,LCC02,ACMP20}).

The key idea of the methodology developed by Christopher and Lloyd~\cite{CL95} is a recursive construction based on singular transformations of kind $(x,y)\mapsto(x^2-A,y^2-A)$. Starting from a planar polynomial vector field $X$ possessing a given number of limit cycles, this transformation produces a new vector field whose degree is doubled and which contains four diffeomorphic copies of $X$, one in each quadrant, and consequently four copies of all its limit cycles. Moreover, new centers created along the coordinate axes after the transformation give rise to additional limit cycles, whose number is proportional to the square of the degree of $X$. By iterating this process, they construct a sequence $(X_k)_{k\in\mathbb{N}}$ of polynomial vector fields of degree $2^k - 1$ and admitting at least  $S_{k+1} = 4S_k + (2^k - 1)^2$ limit cycles, which yields a lower bound for the growth of $\CH(n)$ of order $n^2\log n$.

The most recent advance in understanding the Hilbert number was achieved by Gasull and Santana~\cite{GS25}, who established the strict monotonicity of the function $\CH(n)$, provided it is finite. More precisely, they first showed that $\CH(n)$, if finite, can be realized by hyperbolic limit cycles, and subsequently that $\CH(n+1) \geq 1+\CH(n)$. Their approach begins with a polynomial vector field $X$ of degree $n$ having exactly $\CH(n)$ hyperbolic limit cycles. They then perform a singular reparametrization of time that preserves these limit cycles, increases the degree of $X$ by one, and creates a line of singularities. Next, a first linear perturbation is applied to destroy this line of singularities and generate a Hopf point. Finally, a second linear perturbation is introduced so that the system undergoes a Hopf bifurcation, giving rise to one additional hyperbolic limit cycle beyond the existing $\CH(n)$.

Over the past few decades, discontinuous differential systems have attracted increasing attention, driven both by their broad range of applications and by the mathematical challenges they pose. The second part of Hilbert's Sixteenth Problem has also been investigated within this framework (see, for instance, \cite{BCT24,CarFerNov2022b,CNT19,GouTor20,LL23,NOVAES2022133523}). In this paper, our main objective is to extend the aforementioned results to the case of crossing limit cycles in piecewise polynomial vector fields, that is, limit cycles that cross transversely the set of discontinuity. As expected, the strategies employed in the smooth case do not apply directly to this discontinuous context. Nevertheless, we are able to develop suitable adaptations of these methods to address the presence of discontinuities along the switching line, thereby establishing analogous results for crossing limit cycles of piecewise polynomial vector fields.

Let $ Z= (P, Q) $ be a piecewise polynomial planar vector field defined by  
\[
P(x,y) =
\begin{cases}
P^+(x,y), & x > 0,\\
P^-(x,y), & x < 0,
\end{cases}
\qquad
Q(x,y) =
\begin{cases}
Q^+(x,y), & x > 0,\\
Q^-(x,y), & x < 0,
\end{cases}
\]
where $ P^{\pm} $ and $ Q^{\pm} $ are polynomials. The trajectories of $Z$ is given by the Filippov convention \cite{Filippov88}. We define $\deg P:=\max\{\deg P^+,\deg P^-\}$ (analogously for $\deg Q$) and denote by $ \pi_c(P,Q) $ the number of crossing limit cycles of $ Z $.
The analogue of the Hilbert number in this discontinuous setting is defined as
\begin{equation*}\label{hilbert}
\CH_c(n) := \sup \bigl\{\, \pi_c(P,Q) : \deg P,\deg Q \le n \,\bigr\}.
\end{equation*}

As in the polynomial case, determining upper bounds for $\CH_c(n)$ remains a challenging problem, even for $n=1$, that is, for piecewise linear vector fields. The finiteness of $\CH_c(1)$ was established only in recent years by Carmona et al.\cite{CarFerNov2022b}, using an integral characterization of Poincaré half-maps \cite{CarmonaEtAl19} and fewnomial theory~\cite{kho}. We refer the interested reader to~\cite{CarmonaEtAl19b,CFSN-center,Carmona2022-mc}, where this technique has been further employed in other contexts. Most known results in this piecewise setting also provide lower bounds, such as $\CH_c(1)\ge 3$~\cite{HuanYang12,LNT15,LP12}, $\CH_c(2)\ge 12$~\cite{BCT24}, and $\CH_c(3)\ge 24$~\cite{GouTor20}, while the best general lower bound currently available is $\CH_c(n)\ge 2n-1$, established by Buzzi et al.~\cite{BFJ18}, yielding a linear lower estimate for the growth of $\CH_c(n)$. Although this lower bound is strictly increasing, this does not by itself imply that $\CH_c(n)$ is strictly increasing, a fact which, to date, has not been established.

Our first main result sharpens the known linear estimate by showing that the growth rate of  
$ \CH_c(n) $ is at least of order $ n^{2} $. In addition, we establish that $ \CH_c(n) $ is indeed strictly increasing, provided it is finite, as stated below:

\begin{mtheorem}\label{mainthmA}
$\CH_c(n)$ grows at least as fast as $n^{2}/4$. More precisely,
\begin{equation}\label{eq:liminf}
\liminf_{n\to\infty} \frac{\CH_c(n)}{n^{2}} \ge \frac{1}{4}.
\end{equation}
Furthermore, if $\CH_c(n) < \infty$ for some $n \in \mathbb{N}$, then $\CH_c(n+1) \ge 1+\CH_c(n).$
\end{mtheorem}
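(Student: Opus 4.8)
The statement splits into two essentially independent assertions---the quadratic lower bound \eqref{eq:liminf} and the strict monotonicity $\CH_c(n+1)\ge 1+\CH_c(n)$---and I would attack them by different routes, the first constructive and the second via a bifurcation argument adapting Gasull--Santana.

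For the quadratic estimate, my plan is to produce, for each sufficiently large $n$, an explicit piecewise polynomial field $Z=(P,Q)$ with $\deg P,\deg Q\le n$ carrying at least $\lfloor n^2/4\rfloor$ crossing limit cycles. I would start from an integrable system possessing a large period annulus whose orbits all cross the switching line $\{x=0\}$ transversally, and subject it to a piecewise polynomial perturbation $\e(P_1^\pm,Q_1^\pm)$ of degree $n$. The crossing periodic orbits that persist are detected by the simple zeros of a first order averaged (Melnikov) function $f(\rho)$, obtained by integrating the perturbation along the unperturbed orbits and summing the contribution of the right half-orbit ($x>0$, using $P_1^+,Q_1^+$) with that of the left half-orbit ($x<0$, using $P_1^-,Q_1^-$); because the two sides are perturbed independently, the number of free coefficients of $f$ is roughly doubled relative to the smooth case. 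To reach the \emph{quadratic} order I would not rely on a naive harmonic oscillator (whose first order function is only of degree $n$), but instead inflate the degree by a Christopher--Lloyd type singular transformation compatible with $\{x=0\}$: a single degree-doubling step turns a field of degree $\sim n/2$ into one of degree $n$ and creates on the order of $(n/2)^2=n^2/4$ new centers strung along the switching line, each of which sheds one crossing limit cycle under the perturbation. Establishing \eqref{eq:liminf} then amounts to exhibiting the perturbation so that $f$ has at least $\lfloor n^2/4\rfloor$ simple positive zeros and passing to the liminf.

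For the monotonicity, I would follow the three-move scheme of Gasull--Santana, adapted to the crossing setting. First, I would show that when $\CH_c(n)<\infty$ the supremum is attained by a field $Z$ of degree $\le n$ all of whose crossing limit cycles are hyperbolic: since the crossing return map is analytic in the system's coefficients, a small piecewise polynomial perturbation of degree $\le n$ makes every crossing limit cycle hyperbolic, and maximality forbids the count from rising, while hyperbolicity (via the implicit function theorem applied to the displacement map) forbids it from dropping. Next, I would apply a singular time reparametrization---multiplying $Z^\pm$ by a common degree-one factor that is positive on the region enclosing the existing cycles---so as to raise the degree to $n+1$ and create a line of singularities while leaving all orbits, hence all $\CH_c(n)$ hyperbolic crossing cycles, untouched. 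A first linear perturbation then breaks this line of singularities, leaving a single monodromic point which I would place \emph{on} the switching line $\{x=0\}$ so that the periodic orbits born from it are genuinely crossing. A second linear perturbation finally unfolds a crossing (pseudo-)Hopf bifurcation at that point, giving birth to one additional small hyperbolic crossing limit cycle; since the original $\CH_c(n)$ cycles are hyperbolic they survive all these $O(\e)$ modifications, so the resulting degree-$(n+1)$ field has at least $\CH_c(n)+1$ crossing limit cycles.

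The hard parts are genuinely different in the two halves. In the quadratic estimate the obstacle is the lower bound on the number of simple zeros of $f$: one must verify that the functions generated by the degree-$n$ piecewise perturbation along the chosen annulus form an extended complete Chebyshev system of the required dimension (or construct the coefficients explicitly), which is exactly where the singular transformation and the independence of the two sides have to be leveraged carefully. In the monotonicity part the delicate point is the bifurcation at the switching line: unlike the smooth Hopf bifurcation, the birth of a crossing limit cycle from a point on $\{x=0\}$ must be controlled through the first nonvanishing Lyapunov-type coefficient of the crossing Poincaré map, and one must check both that the bifurcating orbit truly crosses $\{x=0\}$ transversally and that it is hyperbolic and disjoint from the preexisting cycles.
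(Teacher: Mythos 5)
Your monotonicity half follows essentially the paper's route (Gasull--Santana adapted via a pseudo-Hopf bifurcation: singular time reparametrization by a linear factor, a monodromic invisible--invisible two-fold placed at a crossing point of $\Sigma$, then an unfolding controlled by a Lyapunov-type coefficient), but the preliminary hyperbolicity step has a real gap. You assert that a small degree-$n$ perturbation makes every crossing limit cycle hyperbolic and that ``hyperbolicity forbids the count from dropping''; however, before hyperbolicity is achieved, a cycle whose displacement function has an even-order zero can simply be annihilated by a generic perturbation, and maximality only bounds the count from above, so nothing in your argument prevents the number of cycles from decreasing. The paper's resolution is the specific perturbation $Z_b$ obtained by translating only the piece in $x>0$ by $b$: one computes that $\partial_b$ of every crossing displacement function is an alternating sum of products of nonpositive half-return derivatives, hence $\ge 1$, so each multiple zero unfolds monotonically; classifying cycles by the parity and sign of the leading term of the displacement into types $O$, $E^+$, $E^-$ and choosing the sign of $b$ to favor the more numerous even type yields at least $o+2\max(e^+,e^-)\ge \CH_c(n)$ simple zeros, whence exactly $\CH_c(n)$ hyperbolic cycles. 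That monotone-unfolding device is the missing idea.

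For the quadratic bound your source (Christopher--Lloyd) is the paper's, but the mechanism you describe cannot work as stated: a piecewise polynomial field of degree $n$ has at most $O(n)$ isolated singular points on the switching line, so a degree-doubling step cannot create $(n/2)^2$ centers ``strung along'' $\{x=0\}$, and one limit cycle per center would give only a linear count. The correct accounting, which the paper imports from Christopher--Lloyd, is $2^{k-1}-1\sim n/2$ foci on the $y$-axis each of cyclicity $2^{k-1}-1\sim n/2$; the quadratic total comes from this \emph{simultaneous} multi-focus cyclicity, which is the genuinely hard part your sketch bypasses. The paper then observes that these $(2^{k-1}-1)^2$ cycles already cross $\{x=0\}$, so the smooth field read as a piecewise one suffices, and adds a simultaneous pseudo-Hopf bifurcation (all foci share stability and rotation sense since $\dive Z_k(0,y)=\e P_k'(0)$ and the second component is independent of $y$) to make the system genuinely discontinuous and gain $2^{k-1}-1$ further cycles, reaching $(n_k^2-1)/4$. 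Finally, note that this construction only provides the bound along the subsequence $n_k=2^k-1$, whereas you promise it for every large $n$; interpolating via monotonicity of $\CH_c$ alone degrades the constant, so the passage to the liminf in \eqref{eq:liminf} deserves an explicit argument in either treatment.
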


Theorem \ref{mainthmA} extend some of the main developments previously obtained for Hilbert's number in the polynomial setting to the piecewise polynomial context.

The number of crossing limit cycles has also been investigated for the more restrictive class of piecewise polynomial Hamiltonian systems, that is, under the assumption that the polynomial vector fields $(P^+,Q^+)$ and $(P^-,Q^-)$ are both Hamiltonian. In this context, we denote
\begin{equation*}\label{hilbert}
\widehat{\CH}_c(n) := \sup \bigl\{\pi_c(P,Q) : (P^{\pm},Q^{\pm}) \text{ are Hamiltonian v.f. and } \deg P,\deg Q \le n \bigr\}.
\end{equation*}

Although no general uniform upper bound has been established for the number of crossing limit cycles in this setting, the finiteness problem can be approached by means of algebraic equations provided by the Hamiltonians. For instance, in \cite{LL23} it is shown that there exist at most $[n^2/2]$ crossing limit cycles with only two pieces in piecewise polynomial Hamiltonian vector fields of degree $n$. For a general uniform upper bound, one must take into account the possible configurations of crossing limit cycles. Lower bounds have also been addressed in several works, see, for example, \cite{LL23,YANG20111026}. The best general lower bound depending on the degree of the piecewise polynomial Hamiltonian system known so far is $\widehat{\CH}_c(n) \ge n-1$. Our second main result strengthens this estimate by showing that the growth rate of $\widehat{\CH}_c(n)$ is at least of order $n \log n$, as stated below:

\begin{mtheorem}\label{mainthmB}
	$\widehat{\CH}_c(n)$ grows at least as fast as $(n\log n)/(2\log 2)$. More precisely,
	\begin{equation}\label{liminfHhat}
	\liminf_{n\to \infty} \frac{\widehat{\CH}_c(n)}{n\, \log \, n} \ge \frac{1}{2\log 2}.
	\end{equation}
\end{mtheorem}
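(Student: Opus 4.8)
The plan is to adapt the Christopher--Lloyd recursive doubling construction to the piecewise Hamiltonian setting. The starting point is the singular transformation $(x,y)\mapsto(x^2-A,y^2-A)$, which, when applied to a polynomial Hamiltonian vector field $X$ of degree $m$ having $N$ limit cycles, produces a new vector field whose degree roughly doubles and which contains four diffeomorphic copies of $X$ together with additional limit cycles arising from the centers created along the coordinate axes. The crucial observation I would exploit is that this construction is compatible with the Hamiltonian structure: if $X$ is Hamiltonian with Hamiltonian $H(x,y)$, then the transformed field is again (after a suitable integrating factor) Hamiltonian, so that iterating stays within the class defining $\widehat{\CH}_c$. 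Unlike the construction in Theorem~\ref{mainthmA}, here I must keep track of the \emph{crossing} limit cycles relative to the switching line $\{x=0\}$, so the symmetry of the transformation in $x$ must be arranged so that limit cycles cross transversely rather than being tangent to the discontinuity set.

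The key steps, in order, would be as follows. First, I would set up the piecewise version of the doubling map, choosing the discontinuity line $\{x=0\}$ and verifying that the transformed piecewise field $Z_{k+1}$ is again piecewise Hamiltonian of the appropriate degree; the degree of each iterate should follow the law $\deg Z_{k+1}=2\deg Z_k + c$ for a small constant $c$, giving $\deg Z_k$ of order $2^k$. Second, I would establish the recursion for the number of crossing limit cycles, of the schematic form $M_{k+1}\ge 4M_k + (\text{contribution of order } \deg Z_k)$, where the second term counts the crossing cycles produced by the newly created centers along the axes --- but here, in contrast to the smooth $n^2$ count, the piecewise/Hamiltonian restriction forces the per-step gain to be only \emph{linear} in the degree rather than quadratic, which is precisely what degrades the bound from $n^2\log n$ to $n\log n$. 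Third, I would solve the recursion explicitly: with $\deg Z_k\sim 2^k$ and a linear per-step gain, the dominant term in $M_k$ accumulates as $\sum_{j} j\,2^{j}$-type sums, yielding $M_k$ of order $k\,2^k$, i.e. $M_k\sim \deg Z_k \cdot \log_2(\deg Z_k)$.

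Finally, I would convert the sequence estimate into the liminf statement. Writing $n_k=\deg Z_k\sim 2^k$ so that $k\sim \log n_k/\log 2$, the bound $M_k\gtrsim n_k\,\log_2 n_k = n_k\log n_k/\log 2$ already gives a constant of the right order; the stated constant $1/(2\log 2)$ reflects the precise leading coefficient surviving from the recursion and the need to interpolate between the sparse degrees $n_k$ to obtain a statement for all $n$. The interpolation step is standard: since $\widehat{\CH}_c$ is nondecreasing in $n$ (any lower-degree example embeds into higher degree), one controls $\widehat{\CH}_c(n)$ for $n_k\le n<n_{k+1}$ from below by $\widehat{\CH}_c(n_k)\ge M_k$, and the factor-two gaps between consecutive $n_k$ account for the loss that produces the $2$ in the denominator.

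The main obstacle I anticipate is the second step: verifying that the singular doubling transformation genuinely preserves the piecewise Hamiltonian structure while producing \emph{crossing} limit cycles, and correctly counting the cycles contributed by the centers created on the axes. In the smooth polynomial case these centers are perturbed to release order $(\deg X)^2$ limit cycles, but under the Hamiltonian and crossing constraints the available perturbations are far more limited, so the honest per-step count and its leading constant require care; getting this constant right is exactly what determines whether the final bound achieves $1/(2\log 2)$ rather than a smaller value.
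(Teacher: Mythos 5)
Your overall strategy (a recursive doubling construction adapted to the piecewise Hamiltonian class, leading to a recursion whose solution is of order $k\,2^k$) is the right idea, but the specific transformation you propose does not work in the crossing setting, and your recursion is internally inconsistent. The Christopher--Lloyd map $(x,y)\mapsto(x^2-A,y^2-A)$ squares \emph{both} coordinates, so the four preimage copies of a limit cycle land one in each open quadrant determined by the coordinate axes; in particular none of them meets the switching line $\{x=0\}$, so none of them is a \emph{crossing} limit cycle, and your count $M_{k+1}\ge 4M_k+\cdots$ collapses. Moreover, that recursion, with $\deg Z_k\sim 2^k$, has solution $M_k\sim 4^k\sim(\deg Z_k)^2$, not the $k\,2^k$ you then claim; the $\sum_j j\,2^j\sim k\,2^k$ behavior you invoke corresponds to a duplication factor of $2$, not $4$. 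The paper resolves exactly this tension by using the half-singular transformation $\Phi(x,y)=(x,y^2-2)$, which fixes the switching line and each half-plane $\{\pm x>0\}$, restricts to diffeomorphisms of $\mathbb{R}\times(0,2)$ and of $\mathbb{R}\times(-2,0)$ onto $\mathbb{R}\times(-2,2)$, and therefore duplicates each crossing limit cycle into exactly \emph{two} crossing copies while doubling the degree of the Hamiltonian. Combined with a perturbation $\e\,\e_{k+1}P^{\pm}_{k+1}(y)$ of the pulled-back Hamiltonians that releases $d_k-1$ additional crossing cycles around the origin (via a Melnikov-type analysis of the displacement function, with an inductive nondegeneracy condition $(A_k)$ guaranteeing that the previously obtained cycles persist), this yields $c_{k+1}=2c_k+d_k-1$ with $d_k=3\cdot 2^k$, hence $c_k=3k\,2^{k-1}+1$ crossing limit cycles in degree $n_k=3\cdot2^{k}-1$.

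Two further points. First, your remark that the transformed field is Hamiltonian ``after a suitable integrating factor'' is not good enough: a vector field admitting an integrating factor is integrable but in general not Hamiltonian, so it would leave the class defining $\widehat{\CH}_c(n)$; the paper avoids this by defining the new Hamiltonian directly as $H_{k+1}^{\pm}=H_k^{\pm}\circ\Phi+\e\,\e_{k+1}P_{k+1}^{\pm}$ and taking its genuine Hamiltonian vector field. Second, your explanation of the constant $1/(2\log 2)$ is misplaced: it does not arise from interpolating across the gaps between consecutive admissible degrees, but directly from the closed form $c_k=3k\,2^{k-1}+1=\tfrac{1}{2}(n_k+1)\log_2\!\bigl(\tfrac{n_k+1}{3}\bigr)+1$, i.e. the factor $\tfrac{1}{2}$ is already present along the subsequence $n_k$ itself.
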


The paper is structured as follows. In Section~\ref{sec:proofB}, we present the proof of Theorem~\ref{mainthmA}. The overall strategy follows the approach used in \cite{GS25} for the smooth case; however, several technical aspects must be adapted to accommodate the piecewise setting, for instance by incorporating the notion of pseudo-Hopf bifurcation. In Section~\ref{sec:proofA}, we provide the proof of Theorem~\ref{mainthmB}, where the Christopher–Lloyd methodology is suitably modified for the piecewise framework.

\section{Proof of Theorem \ref{mainthmA}}\label{sec:proofB}

We begin this section with a brief description of the pseudo-Hopf bifurcation (see \cite[Proposition~2.3]{CNT19}), originally reported by Filippov in his seminal book \cite{Filippov88} (see item~(b) on p.~241). This bifurcation is characterized by the emergence of a limit cycle when the sliding set induces a change in the stability of a monodromic equilibrium. Consider the one-parameter family of $\mathcal{C}^1$ piecewise vector fields
\[
Z_{b}(x,y)=
\begin{cases}
Z^+(x,y-b), & x>0,\\
Z^-(x,y), & x<0,
\end{cases}
\]
where $Z^{\pm}=(P^{\pm},Q^{\pm})$. Assume that, for $b=0$, the origin is a monodromic equilibrium of $Z_0$. For conciseness, we set $\ell=-1$ if the origin is attracting, $\ell=1$ if it is repelling, and $\ell=0$ otherwise. The parameter $\ell$ is related to the Lyapunov quantities. Moreover, let $a=1$ if the trajectories near the origin rotate counterclockwise, and $a=-1$ otherwise. If $\ell\neq 0$, then the vector field $Z_b$ undergoes a pseudo-Hopf bifurcation at $b=0$. More precisely, whenever $a\,b\,\ell<0$ and $b$ is sufficiently small, the vector field $Z_b$ admits a limit cycle surrounding a sliding segment (see Figure~\ref{fig:PH-bif}).
\begin{figure}[H]
	\centering
	\begin{overpic}[scale=0.6]{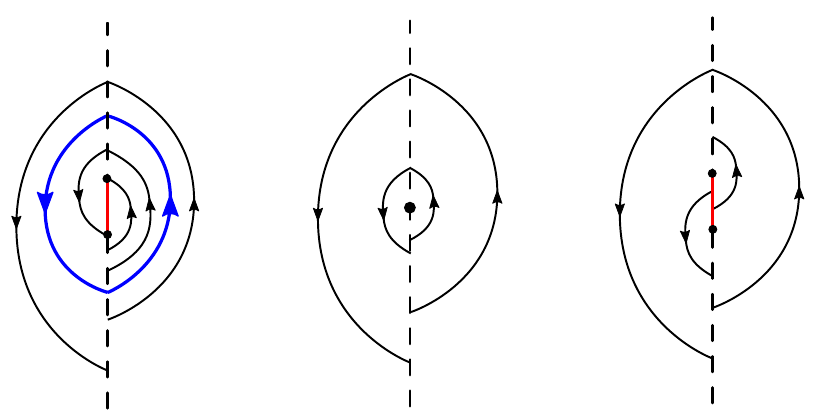}
			\put(10,-3){$b<0$}
			\put(47,-3){$b=0$}
			\put(84,-3){$b>0$}
	\end{overpic}
	\vspace{0.5cm}
	\caption{Pseudo-Hopf bifurcation in the case $\ell=a=1$.}
	\label{fig:PH-bif}
\end{figure}

The proof of Theorem \ref{mainthmA} is split in the next three propositions.
\begin{proposition}\label{prop:n2}
There exists a sequence of piecewise polynomial vector fields $(Z_k)_{k\in\mathbb{N}}$ with degree $n_k = 2^{k}-1$ and having $(n_k^2-1)/4$ crossing limit cycles.
\end{proposition}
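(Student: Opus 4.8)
The plan is to realize the sequence $(Z_k)$ through a recursive doubling construction modeled on the one of Christopher and Lloyd \cite{CL95}, but set up so that the switching line stays at $\{x=0\}$ at every step. I would start from a base field $Z_1$ of degree $n_1=2^1-1=1$, a piecewise linear field with a monodromic configuration but no crossing limit cycle, so that $m_1:=0$. To pass from $Z_k$ to $Z_{k+1}$ I would use a singular substitution modeled on the four-to-one Christopher--Lloyd fold $(x,y)\mapsto(x^2-A,\,y^2-A)$, composed with the standard time rescaling that clears the denominators and keeps the field polynomial on each side of the switching line. As in the smooth case, this substitution raises the degree from $d$ to $2d+1$, so inductively $Z_k$ has degree $n_k=2^{k}-1$, exactly as required. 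The crucial adaptation is that the resulting field is reversible across $\{x=0\}$ in the sense that $\sigma_*Z_{k+1}=-Z_{k+1}$ with $\sigma(x,y)=(-x,y)$; this reversibility is what lets the folded field be read as a genuine piecewise Filippov field with switching line $\{x=0\}$, so that the symmetric copies produced by the fold close up into \emph{crossing} periodic orbits.

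With this in place I would count the crossing limit cycles produced at each step. Because the fold is four-to-one and is compatible with the reflection across $\{x=0\}$, each crossing limit cycle of $Z_k$ should lift to four crossing limit cycles of $Z_{k+1}$. In addition, the substitution creates a string of monodromic points along one coordinate axis, each of which can be turned into exactly one hyperbolic crossing limit cycle by the pseudo-Hopf bifurcation recalled above (see \cite[Proposition~2.3]{CNT19}); a direct degree count gives $2^{k}$ such points. Crucially, whereas in the smooth case the new centers appear on both axes and contribute a number of order $n_k^2$, the discontinuity restricts the crossing-compatible centers to a single axis, leaving only a \emph{linear} number $2^{k}$ of new crossing limit cycles per step. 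This explains why the outcome is $n^2/4$ rather than the $n^2\log n$ of \cite{CL95}. Writing $m_k$ for the number of crossing limit cycles of $Z_k$, this yields the recursion
\begin{equation*}
m_{k+1}=4\,m_k+2^{k},\qquad m_1=0,
\end{equation*}
whose closed-form solution is $m_k=4^{k-1}-2^{k-1}=((2^{k}-1)^2-1)/4=(n_k^2-1)/4$, which is precisely the claimed count.

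The main obstacle I expect lies not in the degree bookkeeping or in solving the recursion, but in verifying that all the objects produced are genuine crossing limit cycles of the Filippov dynamics, uniformly in $k$. Two points must be controlled at once: first, that the folding substitution places the switching line exactly at $\{x=0\}$ and that the four lifted copies of each cycle cross it \emph{transversally}, with no sliding segment obstructing the crossing (this is exactly where the piecewise setting departs from the smooth construction of \cite{CL95}, since there the fold simply quadruples ordinary limit cycles); and second, that the monodromic points generated along the axis satisfy the nondegeneracy hypothesis $\ell\neq 0$ of the pseudo-Hopf bifurcation, so that, for the right sign of the perturbation parameter ($a\,b\,\ell<0$), each spawns precisely one hyperbolic crossing limit cycle rather than a center, a sliding cycle, or nothing. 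Checking the transversality of the crossings together with these sign and nondegeneracy conditions is the delicate step, and is where I anticipate the construction will require the most care.
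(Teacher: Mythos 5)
Your route is genuinely different from the paper's, and unfortunately it contains a gap that I do not think can be repaired as stated. The paper does not iterate a piecewise construction at all: it takes the final smooth Christopher--Lloyd field $Z_k$ of degree $n_k=2^k-1$, which already has $2^{k-1}-1$ hyperbolic foci \emph{on the $y$-axis}, each surrounded by a nest of $2^{k-1}-1$ small hyperbolic limit cycles crossing $\{x=0\}$; it then shifts the half-field on $\{x>0\}$ by a small $b$, so that the $(2^{k-1}-1)^2$ nested cycles persist as crossing limit cycles and each of the $2^{k-1}-1$ foci simultaneously undergoes a pseudo-Hopf bifurcation (all have the same stability, since $\dive Z_k(0,y)=\e P_k'(0)$, and the same rotation sense), contributing one more cycle apiece. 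The total is $(2^{k-1}-1)^2+(2^{k-1}-1)=(n_k^2-1)/4$.

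The fatal step in your proposal is the claim that each crossing limit cycle of $Z_k$ lifts to four \emph{crossing} limit cycles of $Z_{k+1}$. Under the fold $(x,y)\mapsto(x^2-A,y^2-A)$ the preimage of the switching line $\{u=0\}$ is the pair of lines $\{x=\pm\sqrt{A}\}$, while the preimage of $\{u=-A\}$ is $\{x=0\}$; consequently the four lifted copies of a cycle lie strictly inside the four open quadrants and never meet $\{x=0\}$. They are ordinary limit cycles of the smooth pieces, not crossing limit cycles of the Filippov field with switching line $\{x=0\}$, so the recursion $m_{k+1}=4m_k+2^k$ does not count the objects you need, and the agreement of its closed form with $(n_k^2-1)/4$ is arithmetic coincidence rather than evidence of correctness. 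A second problem sits in your pseudo-Hopf step: the reversibility $\sigma_*Z_{k+1}=-Z_{k+1}$ that you invoke forces the monodromic points created on $\{x=0\}$ to be \emph{centers} of the unperturbed folded field, so $\ell=0$ and the pseudo-Hopf bifurcation produces nothing until one first adds a symmetry-breaking perturbation making them hyperbolic foci (this is exactly the role of the $\e P$ terms in Christopher--Lloyd, which moreover yield a \emph{quadratic}, not linear, number of small cycles around those axis foci --- the very cycles the paper's proof counts). If you want to salvage a recursive piecewise scheme of this flavor, the place to look is the paper's Section on Hamiltonian systems, where a one-variable fold $(x,y)\mapsto(x,y^2-2)$ is used precisely because it fixes the switching line $\{x=0\}$.
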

\begin{proof}
Christopher and Lloyd in \cite{CL95}  (see also \cite[Section~3]{LCC02}) constructed a sequence of polynomial vector fields $(Z_k)_{k\in\mathbb{N}}$ of degree $n_k = 2^{k}-1$, each possessing at least $(2^{k-1}-1)^2$ hyperbolic limit cycles crossing the $y$-axis. More precisely, $Z_k$ has $2^{k-1}-1$ foci distributed along the $y$-axis, each surrounded by $2^{k-1}-1$ small-amplitude hyperbolic limit cycles. Moreover, $Z_k$ has the following form
\[
Z_k(x,y)
=
\Big(
X_k (y)
 + \e \big(P_{k-1}(x^2-a_k,y^2-b_k)+P_k(x)\big),
Y_k(x)
\Big),
\]
where $(X_k(y),Y_k(x))=( -\partial_y H_k(x,y), \p_x H_k(x,y) )$, $H_k$, $P_{k-1}$, and $P_k$ are polynomials, $a_k$ and $b_k$ are suitably chosen parameters, and $\e\neq 0$ is a small parameter. Computing the divergence of $Z_k$ along the $y$-axis yields
\[
\dive Z_k(0,y)=\e\, P_k'(0).
\]
Hence, each of the $2^{k-1}-1$ foci is hyperbolic, and all share the same stability. Moreover, the independence of the second coordinate of $Z_k(x,y)$ from the variable $y$ ensures that the orbits near all foci on the $y$-axis rotate in the same direction.

Now, for each $k$, consider the one-parameter family of piecewise polynomial vector fields
\[
Z_k^b(x,y)=
\begin{cases}
Z_k(x,y-b), & x>0,\\[2mm]
Z_k(x,y),   & x\le 0.
\end{cases}
\]
Since, for $b=0$, all foci of $Z_k^0 = Z_k$ on the $y$-axis have the same stability and the nearby orbits rotate in the same direction, they undergo a simultaneous pseudo-Hopf bifurcation as the parameter $b$ crosses zero. This bifurcation creates additional $2^{k-1}-1$ crossing limit cycles, each surrounding a sliding segment arising from the foci on the $y$-axis, while the original $(2^{k-1}-1)^2$ hyperbolic ones persist for $b$ sufficiently small as crossing limit cycles. Consequently, there exists a sufficiently small $b^* \neq 0$ such that $Z_k^{b^*}(x,y)$ exhibits $2^{k-1}(2^{k-1}-1)=(n_k^{2}-1)/4$ crossing limit cycles.
\end{proof}

\begin{remark}
Notice that Proposition~\ref{prop:n2} provides the estimate $\CH_c(n_k)\geq (n_k^2-1)/4$, from which~\eqref{eq:liminf} directly follows. Nevertheless, it is worth emphasizing that the validity of~\eqref{eq:liminf} does not essentially depend on the ``pseudo-Hopf bifurcation procedure'' employed in Proposition~\ref{prop:n2}. Indeed, the polynomial vector field $Z_k$ used in the proof of Proposition~\ref{prop:n2} may be regarded as a piecewise polynomial vector field separated by the $y$-axis, where the $(2^{k-1}-1)^2$ hyperbolic limit cycles crossing the $y$-axis are themselves crossing limit cycles. In this case, one obtains $\CH_c(n_k)\geq (2^{k-1}-1)^2=(n_k-1)^2/4$, which also provides the growth rate in~\eqref{eq:liminf}. The ``pseudo-Hopf bifurcation procedure'', in addition to increasing the number of limit cycles for each degree $n_k$, ensures the construction of a sequence of proper (discontinuous) piecewise polynomial vector fields.
\end{remark}

\begin{proposition}
Assume that $\CH_c(n)<\infty$ for some $n\in\mathbb{N}$. Then, there exists a piecewise polynomial vector field of degree $n$ possessing exactly $\CH_c(n)$ hyperbolic crossing limit cycles.
\end{proposition}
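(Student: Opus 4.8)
The plan is to start from a vector field that realizes the maximum and then embed it into a one–parameter family of rotations which, for every small nonzero parameter, makes all crossing limit cycles hyperbolic while pinning their number to $\CH_c(n)$. First, since $\CH_c(n)$ is a finite supremum of nonnegative integers, it is attained: there is a piecewise polynomial vector field $Z=(Z^+,Z^-)$ with $\deg P,\deg Q\le n$ having exactly $N:=\CH_c(n)$ crossing limit cycles $\gamma_1,\dots,\gamma_N$. I would encode these cycles as zeros of a displacement function. Parametrizing the crossing region of the switching line $\{x=0\}$ by a coordinate $\rho$, each crossing limit cycle corresponds to a zero of $\Delta(\rho):=\pi^+(\rho)-\pi^-(\rho)$, where $\pi^{\pm}$ are the Poincaré half-maps of $Z^{\pm}$ in the sense of the integral characterization of~\cite{CarmonaEtAl19}. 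A crossing limit cycle is hyperbolic precisely when the associated zero of $\Delta$ is simple; in general I call the order of vanishing of $\Delta$ at $\rho_i$ the multiplicity $m_i$ of $\gamma_i$.

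The next step is to introduce the family $Z_\alpha$ obtained by rotating both half-fields by the same small angle $\alpha$, which keeps the degree bounded by $n$. Away from the switching line this is a rotated vector field in the classical sense, so I would exploit a Duff-type monotonicity for the induced displacement map $\Delta_\alpha$: the derivative $\partial_\alpha\Delta_\alpha$ has constant sign along the transversal, in particular it is nonzero at each $\rho_i$. Consequently, near a zero of multiplicity $m_i$ the local model is $\Delta_\alpha(\rho)\approx a_i(\rho-\rho_i)^{m_i}+c_i\,\alpha$ with $a_i,c_i\neq 0$, so that for small $\alpha\neq 0$ each $\gamma_i$ gives rise to exactly one simple zero when $m_i$ is odd, and to either two simple zeros or none when $m_i$ is even, depending on the sign of $\alpha$.

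It follows that for every sufficiently small $\alpha\neq 0$ all zeros of $\Delta_\alpha$ are simple, hence all crossing limit cycles of $Z_\alpha$ are hyperbolic; moreover, by continuity $\Delta_\alpha$ is bounded away from zero outside small neighborhoods of the $\rho_i$, so no new cycles are created. It remains to count them. On the one hand, $\pi_c(Z_\alpha)\le\CH_c(n)=N$. On the other hand, each odd-order zero of $\Delta_0$ persists as a sign change and thus contributes one zero of $\Delta_\alpha$ for both signs of $\alpha$, while the even-order zeros are split between the two signs. Comparing the two contributions against the upper bound $N$ forces the total to equal $N$ for each small $\alpha\neq 0$. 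This produces a piecewise polynomial vector field of degree $n$ with exactly $\CH_c(n)$ hyperbolic crossing limit cycles, as claimed.

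The main obstacle I anticipate is the interaction between the rotation and the switching line. Rotating the field alters the contact of the flow with $\{x=0\}$ and may turn crossing points into tangency or sliding points, thereby destroying the crossing nature of the $\gamma_i$ or altering the half-maps $\pi^{\pm}$. To control this I would either restrict the rotation to the interior while freezing the contact structure on $\{x=0\}$, so that the crossing region and the transversality of each $\gamma_i$ with the switching line persist for small $\alpha$, or replace the rigid rotation by a tailored degree-$\le n$ perturbation $Z+\alpha R$ engineered so that each half-map $\pi^{\pm}_\alpha$ varies monotonically in $\alpha$. Establishing the Duff-type monotonicity for the \emph{composed} crossing return map $\pi^+-\pi^-$, rather than for a smooth Poincaré map, is the delicate point I expect to require the most care.
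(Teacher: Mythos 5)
Your overall architecture matches the paper's: realize the supremum, embed the extremal field in a one-parameter family of degree $\le n$, show that the displacement function moves monotonically with the parameter so that each zero of odd (resp.\ even) order yields one (resp.\ two or zero) simple zeros, and then use the cap $\CH_c(n)$ to force exactness and hyperbolicity. Your final counting step is even self-contained: the two inequalities $o+2e^+\le \CH_c(n)$ and $o+2e^-\le \CH_c(n)$ sum to an equality, forcing $e^+=e^-$. However, the central analytic fact of the proof --- that the displacement of the crossing return map depends monotonically on the perturbation parameter --- is exactly the step you leave unproved. You appeal to a ``Duff-type monotonicity'' for the rotated family $Z_\alpha=R_\alpha Z$, but Duff's theory concerns the return map of a single smooth field; for a crossing cycle the return map is a composition of half-return maps of two different fields glued along the switching line, and the sign of $\partial_\alpha\bigl(\pi^+_\alpha-\pi^-_\alpha\bigr)$ at a degenerate zero is precisely what must be established. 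You flag this yourself as ``the delicate point,'' and neither of your proposed fixes closes it: freezing the contact structure on $\{x=0\}$ destroys polynomiality, and an unspecified ``tailored perturbation'' is not an argument. A secondary gap: writing the displacement as $\pi^+-\pi^-$ tacitly assumes each crossing cycle meets the switching line in exactly two points, whereas in general it meets it in $2k$ points and the return map is a composition $\phi_1^+\circ\phi_1^-\circ\cdots\circ\phi_k^+\circ\phi_k^-$.

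The paper resolves both difficulties with a different, and much cheaper, perturbation: translate only the right half-field vertically, taking $Z_b^+(x,y)=Z^+(x,y-b)$ while leaving $Z^-$ fixed. Each perturbed half-map is then explicitly $\phi^+_{i,b}(y)=\phi_i^+(y-b)+b$, and differentiating the composed return map in $b$ produces a telescoping alternating sum whose value is at least $1$, using only that each half-return map is strictly decreasing. This yields the required monotonicity for cycles with any number of crossing points, keeps the degree equal to $n$, and leaves the contact with the switching line untouched. If you wish to retain the rotation, you would need to prove the monotonicity via a Duff--Perko-type variational integral along the concatenated arcs, including the transition factors at the crossing points; as written, your proof has a genuine hole at its key step.
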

\begin{proof}
Let $Z(x,y)=(P(x,y),Q(x,y))$ be a piecewise polynomial vector field, where 
\[
P(x,y)=
\begin{cases}
P^+(x,y), & x>0,\\
P^-(x,y), & x\le 0,
\end{cases}
\qquad
Q(x,y)=
\begin{cases}
Q^+(x,y), & x>0,\\
Q^-(x,y), & x\le 0,
\end{cases}
\]
$P^\pm,Q^\pm$ are polynomials of degree $n$, and $Z$ has exactly $\CH_c(n)<\infty$ crossing limit cycles.  
Denote $Z^+=Z|_{x>0}$ and $Z^-=Z|_{x\le 0}$, and consider the perturbation of $Z$, for $b$ close to $0$,
\[
Z_b(x,y)=
\begin{cases}
Z^+(x,y-b), & x>0,\\
Z^-(x,y), & x\le 0.
\end{cases}
\]

Let $C$ be a crossing limit cycle of $Z_0 = Z$. Observe that at each point where $C$ intersects the discontinuity set, one can locally define a first return map, each possessing a distinct fixed point corresponding to that intersection. Let $\pi_C : S_C \to S_C$, $S_C \subset \Sigma,$
denote one of such maps and $y_C$ its fixed point. Accordingly, $\{(0, y_C)\} = C \cap S_C.$ Observe that $\pi_C$ can be written as the composition of finitely many half-return maps determined by $Z^+$ and $Z^-$, namely,
\[
\pi_C(y) = \phi_1^+ \circ \phi_1^- \circ \cdots \circ \phi_k^+ \circ \phi_k^-(y).
\]
Each map $\phi_i^+$ and $\phi_i^-$ corresponds to a half-return map locally defined by the flow of $Z^+$ and $Z^-$, respectively (see Figure~\ref{cycle}).
\begin{figure}[h]
	\includegraphics[scale=1]{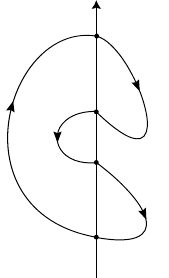}
	\put (-5,90) {\footnotesize$\phi_2^+$}
	\put (-5,20) {\footnotesize$\phi_1^+$}
	\put (-65,75) {\footnotesize$\phi_1^-$}
	\put (-80,30) {\footnotesize$\phi_2^-$}
	\put (-30,10) {\footnotesize$y$}
	\caption{Possible configuration of a limit cycle and the associate half-return maps.}\label{cycle}
\end{figure}
Moreover, since the crossing limit cycle is transverse to the discontinuity line, each of these maps is analytic and strictly decreasing. The analyticity follows from the analytic dependence of solutions on the initial condition, whereas their strictly decreasing character results from the uniqueness of solutions. In particular, $(\phi_i^{\pm})'(y) \leq 0.$ From transversality, the perturbed first return map remains well defined for $b \ne 0$ sufficiently small:
\[
\pi_{C}(y;b) = \phi_{1,b}^+ \circ \phi_1^- \circ \cdots \circ \phi_{k,b}^+ \circ \phi_k^-(y),
\]
where $\phi_{i,b}^+(y) = \phi_i^+(y - b) + b$, for $i = 1, \ldots, k.$ In addition,
\[
\begin{aligned}
\dfrac{\p \pi_C}{\p b}(y;b) =&1-(\phi_1^+)' +(\phi_1^+)' (\phi_1^-)'-(\phi_1^+)' (\phi_1^-)'(\phi_2^+)'+\cdots\\
& -\big[(\phi_1^+)' (\phi_1^-)'\cdots (\phi_{k-1}^+)' (\phi_{k-1}^-)'(\phi_k^+)'\big]+\big[(\phi_1^+)' (\phi_1^-)'\cdots (\phi_k^+)'(\phi_k^-)'\big]\geq 1.
\end{aligned}
\]
Thus, consider the displacement function associated with the crossing limit cycle $C$,
\[
\delta_C(y,b) := \pi_C(y;b)  - y.
\]
Observe that each isolated (resp. simple) zero of $\delta_C(\cdot,b)$ corresponds to a (resp. hyperbolic) crossing limit cycle of $Z_b$.
In particular, for $b=0$, the point $y_C$ is an isolated zero of $\delta_C(\cdot,0)$.
Since $\delta_C(\cdot, 0)$ is analytic, then it writes
\[
\delta_C(y,0) = a_C (y - y_C)^{\ell_C} + (y - y_C)^{\ell_C + 1} R_C(y),
\]
where $a_C \neq 0$, $\ell_C$ is a positive integer, and the function $R_C$ is analytic. 
Since,
\[
\dfrac{\partial \delta_C}{\partial b}(y,b)
= \dfrac{\partial \pi_b}{\partial b}(y)
\geq 1.
\]
the isolated zero $y_C$ unfolds differently in three distinct scenarios, namely:
\begin{itemize}
\item[(O)] $\ell_C$ is odd: one can find $\beta_C > 0$ for which the function $\delta_C(\cdot, b),$ for $b \in (-\beta_C, \beta_C)$, admits a curve of zeros $y_0 : (-\beta_C, \beta_C) \to \mathbb{R}$ satisfying $y_0(0) = y_C$. Moreover, for each $b \ne 0$, this zero is simple.

\item[($E^+$)] $\ell_C$ is even and $a_C > 0$: one can find  $\beta_C > 0$ for which $\delta_C(\cdot, b),$ for $b \in (-\beta_C, \beta_C)$, admits two  curves of zeros $y_1, y_2 : (-\beta_C, 0] \to \mathbb{R}$ satisfying $y_1(0) = y_2(0) = y_C$. Moreover, for each $b \ne 0$, these zeros are simple.

\item[($E^-$)]  $\ell_C$ is even and $a_C < 0$: one can find  $\beta_C > 0$ for which  $\delta_C(\cdot, b)$ admits two curves of zeros $y_1, y_2 : (0, \beta_C] \to \mathbb{R}$ satisfying $y_1(0) = y_2(0) = y_C$. Moreover, for $b \ne 0$, these zeros are simple.
\end{itemize}
Therefore, we say that $C$ is an $O$, $E^+$, or $E^-$ crossing limit cycle of $Z$ according to whether its associated fixed point $y_C$ is a zero of $\delta_C(\cdot,0)$ of type $O$, $E^+$, or $E^-$, respectively. Clearly, this characterization is independent of the choice of the first return map $\pi_C$. 

Finally, denote by $o$, $e^+$, and $e^-$ the numbers of $O$, $E^+$, and $E^-$ crossing limit cycles of $Z_0 = Z$, respectively. Notice that $o + e^+ + e^- = \CH_c(n).$ We may assume that $e^+ \ge e^-$. Taking the unfolding scenarios into account, we select the smallest value $\beta^*$ among the values $\beta_C$ corresponding to the $O$ and $E^+$ crossing limit cycles $C$ of $Z_0$. Hence, for each $b \in (-\beta^*,0)$, the function $\delta_C(\cdot,b)$ associated with each $O$ crossing limit cycle $C$ possesses a simple zero, while for each $E^+$ crossing limit cycle $C$, the function $\delta_C(\cdot,b)$ has two simple zeros. Consequently, the piecewise vector field $Z_b$ admits at least $o + 2e^+ \ge \CH_c(n)$ hyperbolic crossing limit cycles. Since, by hypothesis, $Z_b$ cannot have more than $\CH_c(n) < \infty$ crossing limit cycles, it follows that $Z_b$ has exactly $\CH_c(n)$ crossing limit cycles, all of them hyperbolic. 
\end{proof}

\begin{proposition}
Assume that $\CH_c(n)<\infty$ for some $n\in\mathbb{N}$. Then,  $\CH_c(n+1)\geq \CH_c(n)+1$.
\end{proposition}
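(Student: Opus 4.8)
The plan is to mimic the strategy of Gasull--Santana \cite{GS25}, replacing the smooth Hopf bifurcation by the pseudo-Hopf bifurcation recalled at the beginning of this section. By the preceding proposition, fix a piecewise polynomial vector field $Z$ of degree $n$ possessing exactly $\CH_c(n)$ hyperbolic crossing limit cycles, all transverse to the switching line $\Sigma=\{x=0\}$. Since there are finitely many such cycles and they lie in a bounded region, I would first select a point $p_0=(0,y_0)\in\Sigma$ lying outside every limit cycle and away from all their intersections with $\Sigma$, together with a small neighborhood $U$ of $p_0$ disjoint from the closure of the cycles. The construction will take place entirely inside $U$, leaving the existing cycles undisturbed.

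Next, following the singular reparametrization idea, I would multiply $Z$ by a degree-one factor $g$ whose zero line $\Lambda$ passes through $p_0$ but does not meet the bounded region containing the cycles; after fixing the sign of $g$ so that $g>0$ there, the field $\hat Z=g\,Z$ has degree $n+1$ and retains exactly the same $\CH_c(n)$ crossing limit cycles, since orbits, orientation, transversality and hyperbolicity (the return maps are unchanged by a time-rescaling) are all preserved where $g>0$. The price is a line of singularities along $\Lambda$: at a point $q\in\Lambda$ the Jacobian of $\hat Z$ is the rank-one matrix $Z(q)\,\nabla g(q)^{\top}$, whose only possibly nonzero eigenvalue is $\nabla g(q)\cdot Z(q)$. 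I would choose $g$ and $p_0$ so that this transverse eigenvalue degenerates at $p_0$ for both half-fields, i.e.\ $Z^{\pm}(p_0)$ are tangent to $\Lambda$, so that the line of singularities fails to be normally hyperbolic precisely at $p_0$.

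I would then apply a first linear perturbation, of degree at most $n+1$ and supported near $p_0$, that destroys the line of singularities and unfolds this degeneracy into an isolated monodromic equilibrium of Filippov type at $p_0$, with a definite rotation direction $a$ and nonzero Lyapunov-type index $\ell$, that is, a genuine pseudo-focus. As this perturbation can be taken arbitrarily small and $p_0\in U$ is isolated from the cycles, the $\CH_c(n)$ hyperbolic crossing limit cycles persist unchanged (their displacement functions have simple zeros and their crossings are transverse), while the degree stays equal to $n+1$. Writing the resulting field as $Z^{+},Z^{-}$ near $p_0$ and translating the $x>0$ piece to $Z^{+}(x,y-b)$, the hypotheses of the pseudo-Hopf bifurcation are met: for $b\ne0$ small with $a\,b\,\ell<0$ a single additional crossing limit cycle surrounding a sliding segment appears inside $U$, disjoint from the others. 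The perturbed degree-$(n+1)$ field then has at least $\CH_c(n)+1$ crossing limit cycles, yielding $\CH_c(n+1)\ge \CH_c(n)+1$.

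I expect the main obstacle to be the first linear perturbation: one must simultaneously keep the degree equal to $n+1$, remove the entire line of singularities while leaving an \emph{isolated monodromic} equilibrium at $p_0$ in the Filippov sense, and guarantee that its index $\ell$ is nonzero so that the pseudo-Hopf bifurcation genuinely applies. Establishing monodromy and the nondegeneracy $\ell\ne0$ for the piecewise field near $p_0$---rather than invoking the purely imaginary eigenvalue condition available in the smooth case---is the delicate analytic point of the argument; by contrast, the persistence of the pre-existing cycles follows routinely from their hyperbolicity and transversality under sufficiently small perturbations.
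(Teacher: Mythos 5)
Your overall strategy (singular time-reparametrization to raise the degree, then a pseudo-Hopf bifurcation to gain one cycle) is the same as the paper's, but the central step --- producing an isolated monodromic Filippov singularity with nonzero Lyapunov-type index $\ell$ after destroying the line of singularities --- is exactly the step you leave unconstructed, and the way you set it up would not work. You propose to choose the singular line $\Lambda=\{g=0\}$ so that \emph{both} $Z^{+}(p_0)$ and $Z^{-}(p_0)$ are tangent to $\Lambda$, so that normal hyperbolicity of the singular line degenerates at $p_0$. A single line through $p_0$ can be tangent to both vectors only if $Z^{+}(p_0)$ and $Z^{-}(p_0)$ are parallel, which is not something you can arrange at a generic crossing point; and, more importantly, this is not the mechanism that produces monodromy in the piecewise setting. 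The paper does the opposite: it places the distinguished point at the intersection of the singular line with the switching line $\Sigma$ (after normalizing, via the piecewise-linear homeomorphism $h(x,y)=(x,y-a|x|)$ and a translation, so that all cycles lie in $\{y<0\}$ and the origin is a crossing point), takes $g(x,y)=y$, and explicitly requires the \emph{nondegeneracy} $Q^{\pm}(0,0)\neq 0$, i.e.\ the half-fields are transverse to $\Lambda=\{y=0\}$ at the origin. Monodromy is then obtained not from a degenerate normal eigenvalue but from the discontinuity itself: the explicit perturbation $Q^{\pm}_{\e}=(y\mp\e P^{\pm}(0,0)Q^{\pm}(0,0))Q^{\pm}$ turns the origin into an invisible--invisible two-fold with $Q^{+}_{\e}(0,0)Q^{-}_{\e}(0,0)<0$, and the first nonvanishing Lyapunov coefficient $V_2(\e)$ is computed in closed form and shown to be nonzero under the additional condition $P^{+}(0,0)Q^{+}(0,0)+P^{-}(0,0)Q^{-}(0,0)\neq 0$.

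In short, you have correctly identified where the difficulty lies ("establishing monodromy and the nondegeneracy $\ell\neq 0$ \ldots is the delicate analytic point"), but that point is the entire content of the proof, and your proposed route to it --- degenerating the normal hyperbolicity of the singular line, as in the smooth Gasull--Santana argument --- imports the wrong mechanism from the smooth case. Without an explicit perturbation whose two-fold structure and Lyapunov coefficient can be verified, the pseudo-Hopf bifurcation cannot be invoked, so the proposal as written has a genuine gap. The persistence of the $\CH_c(n)$ hyperbolic cycles under the reparametrization and the small perturbations, which you treat as routine, is indeed the unproblematic part.
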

\begin{proof}

Assume that $\CH_c(n)<\infty$. Let $Z(x,y)=(P(x,y),Q(x,y))$ be the piecewise polynomial vector field
\[
P(x,y)=
\begin{cases}
P^+(x,y), & x\geq 0,\\
P^-(x,y), & x\leq 0,
\end{cases}
\qquad
Q(x,y)=
\begin{cases}
Q^+(x,y), & x\geq 0,\\
Q^-(x,y), & x\leq 0,
\end{cases}
\]
where $P^\pm,Q^\pm$ are polynomials of degree $n$ and $Z$ has exactly $\CH_c(n)$ crossing limit cycles, al of them hyperbolic.

The following conditions (A) and (B) can be assumed without loss of generality:
\begin{itemize}
\item[(A)] All crossing limit cycles lie entirely within the open lower half-plane $\{(x,y):\, y<0\}$, and the origin is a crossing point, that is, $P^+(0,0)P^-(0,0)>0$.
\end{itemize}
Indeed, the crossing limit cycles intersect the discontinuity line at finitely many
points $(0,y_i)$, $i\in\{1,\ldots,k\}$, and all such intersections are transversal. Let
$y^*=\max_i y_i$. Then there exists $a>0$ such that every crossing limit cycle lies below
the curve $y=a|x|+y^*$. Otherwise, either the intersection of a limit cycle with the
discontinuity line at $(0,y^*)$ would fail to be transversal, or there would exist a limit
cycle intersecting the discontinuity line at a height strictly greater than $y^*$, both
of which are impossible. Consider now the piecewise linear homeomorphism $h(x,y)=(x,y-a|x|)$. Notice that $h(0,y)=(0,y)$, so $h$ fixes $\Sigma$ and also leaves the half-planes
$\{(x,y):x>0\}$ and $\{(x,y):x<0\}$ invariant. Moreover, for every $x\in\R$,
$h(x,a|x|+y^*)=(x,y^*)$, and therefore, under this transformation, all crossing limit
cycles are contained in the closed lower half-plane $\{(x,y):\, y\le y^*\}$ and remain
hyperbolic. In addition, the point $(0,y^*)$ is a crossing point, since it belongs to a
crossing limit cycle. Consequently, for $\hat y>y^*$ sufficiently close to $y^*$, the
point $(0,\hat y)$ is also a crossing point, and all crossing limit cycles lie entirely
in the open lower half-plane $\{(x,y):\, y<\hat y\}$. Finally, a translation sending
$(0,\hat y)$ to the origin ensures condition~(A).

\begin{itemize}
\item [(B)] The following nondegeneracy conditions hold,
\[
Q^+(0,0)Q^-(0,0)\neq 0,\quad \text{and} \quad P^+(0,0)Q^+(0,0)+P^{-}(0,0)Q^-(0,0)\neq0,
\]
\end{itemize}
It can be achieved by an arbitrarily small perturbation that preserves all hyperbolic crossing limit cycles.

Now, consider the following 1-parameter family of piecewise polynomial vector fields of degree $n+1$, $Z_{\e}(x,y)=(y P(x,y),Q_{\e}(x,y))$ where
\[
Q_{\e}(x,y)=
\begin{cases}
Q^+_{\e}(x,y)=\big(y-\e P^+(0,0)Q^+(0,0)\big)Q^+(x,y), & x>0,\\
Q^-_{\e}(x,y)=\big (y+\e P^-(0,0)Q^-(0,0)\big)Q^-(x,y), & x<0.
\end{cases}
\]
Notice that, for $\e=0$, $Z_0(x,y)=y Z(x,y)$ and, then, $Z_0|_{y<0}$ corresponds just to a time reparametrization of $Z|_{y<0}$. Taking condition $(A)$ into account, this implies that $Z_0|_{y<0}$ has exactly $\CH_c(n)$ hyperbolic crossing limit cycles and, therefore, for $\e>0$ sufficiently small $Z_{\e}$ has at least $\CH_c(n)$ hyperbolic crossing limit cycles contained the open lower half-plane $\{(x,y):\, y<0\}$.

We claim that, for $\e>0$, the origin is a monodromic two-fold singularity. Indeed, denoting by $Z_{\e}^+=Z_{\e}|_{x\geq 0}$ and $Z_{\e}^-=Z_{\e}|_{x\leq 0}$, and setting $h(x,y)=x$, it follows from conditions in (A) and (B) that
\[
\begin{aligned}
&Z_{\e}^+h(0,0)=Z_{\e}^-h(0,0)=0,\\
&(Z_{\e}^+)^2 h(0,0)=-\e (P^+(0,0))^2(Q^+(0,0))^2<0,\\
&(Z_{\e}^-)^2 h(0,0)=\e (P^-(0,0))^2(Q^-(0,0))^2>0.
\end{aligned}
\]
Thus, the origin is invisible-invisible two-fold singularity. In addition, 
\[
Q^+_{\e}(0,0)Q^-_{\e}(0,0)=-\e^2 P^+(0,0)P^-(0,0)(Q^+(0,0))^2(Q^-(0,0))^2<0,
\]
which guarantees the monodronicity around the origin. Now, following \cite{NS21} (see also \cite{gassulcoll}), we compute the second Lypunov coefficient that controls the stability of the origin as
\[
V_2(\e)=\dfrac{2\delta}{3\e}\dfrac{P^+(0,0)Q^+(0,0)+P^{-}(0,0)Q^-(0,0)}{P^+(0,0)Q^+(0,0)P^{-}(0,0)Q^-(0,0)}+\CO(1),
\]
where $\delta=\sgn(P^+(0,0))$. Notice that, under condition $(B)$, $V_2(\e)\neq 0$ for $\e>0$ sufficiently small. Thus, the origin is attracting (resp. repelling), provided that $V_2(\e)<0$ (resp. $V_2(\e)>0$). Thus, fix $\e=\ov{\e}>0$ in order that $Z_{\ov{\e}}$ has $\CH_c(n)$ hyperbolic crossing limit cycles and $V_2(\ov{\e})\neq 0$. Thus, in light of the characterization of the pseudo-Hopf bifurcation in the beginning of Section \ref{sec:proofB}, we have $\ell=\sgn(V_2(\ov{\e}))$ and $a=-\delta$.

Finally, consider the perturbation
\[
\widetilde Z_b(x,y)=
\begin{cases}
Z_{\ov{\e}}^+(x,y-b), & x>0,\\
Z_{\ov{\e}}^-(x,y), & x\le 0.
\end{cases}
\]
For $b=0$ we have $\widetilde Z_0 = Z_{\overline{\varepsilon}}$, which possesses 
$\CH_c(n)$ hyperbolic crossing limit cycles contained in the open lower half-plane 
$\{(x,y):\, y<0\}$. By hyperbolicity, these cycles persist for all sufficiently small 
$|b|$. Hence, $\widetilde Z_b$ has at least $\CH_c(n)$ hyperbolic crossing limit cycles 
lying in $\{(x,y):\, y<0\}$ whenever $|b|$ is small.  Moreover, $\widetilde Z_b$ undergoes a pseudo-Hopf bifurcation at $b=0$ (see, for instance, \cite[Proposition 4]{NS25}), which creates a sliding segment and an additional hyperbolic crossing limit cycle surrounding 
$S$, whenever $\sgn(b)=\sgn(\delta V_2(\ov{\e}))$ (that is, $a\,b\,\ell<0$). Therefore, there exists $\overline{b}$ arbitrarily close to $0$ such that 
$\widetilde Z_{\overline{b}}$ has at least $1+\CH_c(n)$ hyperbolic crossing limit cycles. 
Since $\widetilde Z_{\overline{b}}$ s a piecewise polynomial vector field of 
 degree $n+1$, 
we conclude that $\CH_c(n+1) \ge 1 + \CH_c(n).$
\end{proof}

\section{Proof of Theorem \ref{mainthmB}}\label{sec:proofA}
This section is devoted to the proof of Theorem~\ref{mainthmB}. Our strategy is to construct a sequence of piecewise polynomial Hamiltonian vector fields $(Z_k)_{k\in\mathbb{N}}$ of degree $n_k = 3\cdot 2^{k} - 1,$
each possessing $c_k = 3\cdot k\cdot 2^{k-1} + 1$ crossing limit cycles. Once such a sequence is obtained, Theorem~\ref{mainthmB} follows directly. Indeed, since  $k = \log((n_k+1)/3)/\log 2,$
we have  
\[
	\widehat{\CH}_c(n_k) \geq c_k = 1 + \frac{(n_k+1)\log\!\left(\frac{n_k+1}{3}\right)}{2\log 2},
\]
which yields the asymptotic estimate \eqref{liminfHhat}.
Thus, the remainder of this section will be devoted to the proof of the following result.

\begin{proposition}\label{prop:aux}
There exists a sequence $(Z_k)_{k\in\mathbb{N}}$  of piecewise polynomial Hamiltonian vector fields with degree $n_k=3\cdot 2^{k}-1$ and having $c_k=3\cdot k \cdot 2^{k-1}+1$   crossing limit cycles.
\end{proposition}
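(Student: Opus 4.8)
The plan is to uncover the recursion hidden in the two formulas and then realize it by a fold-type singular transformation together with a symmetry-breaking Hamiltonian perturbation. Writing $n_k=3\cdot2^k-1$ and $c_k=3k2^{k-1}+1$, one checks directly that $n_{k+1}=2n_k+1$ and $c_{k+1}=2c_k+n_k$, with $n_0=2$ and $c_0=1$. Hence it suffices to exhibit a seed $Z_0$ — a piecewise Hamiltonian field of degree $2$ with one hyperbolic crossing limit cycle across $\Sigma=\{x=0\}$, which is elementary to write down — and an inductive step producing from $Z_k$ (degree $n_k$, with $c_k$ hyperbolic crossing limit cycles) a field $Z_{k+1}$ of degree $n_{k+1}$ carrying at least $c_{k+1}$ of them. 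I would carry ``all crossing limit cycles are hyperbolic (transverse zeros of the displacement map)'' as part of the induction hypothesis, since this is exactly what guarantees persistence.

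For the inductive step I would keep the switching line $\Sigma=\{x=0\}$ fixed and apply the fold $\Phi(x,y)=(x,y^2-B)$ separately to each half. Pulling back $Z_k^\pm=(-\partial_y H_k^\pm,\partial_x H_k^\pm)$ and clearing the resulting denominator by the time rescaling factor $2y$ gives the polynomial field with components $\big(2y\,P^\pm(x,y^2-B),\,Q^\pm(x,y^2-B)\big)$; a direct divergence computation shows it is again Hamiltonian, with $\mathcal H_k^\pm(x,y)=H_k^\pm(x,y^2-B)$, and its degree is $2n_k+1=n_{k+1}$. Choosing $B$ large enough that every crossing cycle of $Z_k$ lies in $\{v>-B\}$ in the image coordinates $(u,v)=\Phi(x,y)$, each such cycle has exactly two $\Phi$-preimages, one in $\{y>0\}$ and one in $\{y<0\}$; since $\Phi$ is a diffeomorphism onto its image on each of these half-planes it conjugates the corresponding first-return maps, so the $c_k$ hyperbolic crossing cycles lift to $2c_k$ hyperbolic crossing cycles of the folded field.

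The heart of the matter is producing the extra $n_k$ crossing cycles, and here I would use the matching characterization of crossing cycles of a piecewise Hamiltonian field: a crossing cycle through $(0,y_1),(0,y_2)$ corresponds to a solution of $\mathcal H_k^+(0,y_1)=\mathcal H_k^+(0,y_2)$ and $\mathcal H_k^-(0,y_1)=\mathcal H_k^-(0,y_2)$ together with the existence of the two connecting arcs. On $\Sigma$ the folded Hamiltonians $g^\pm(y):=\mathcal H_k^\pm(0,y)=H_k^\pm(0,y^2-B)$ are even, so the entire symmetric family $\{(y,-y)\}$ solves the matching system; this family is degenerate, because the line $y=0$ is invariant for each folded half-field and no connecting arc actually crosses it, so it contributes no genuine cycle. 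I would then add a small Hamiltonian perturbation $\e R^\pm$ with $\deg R^\pm\le n_{k+1}$ and $r^\pm(y):=R^\pm(0,y)$ chosen to break the even symmetry; this destroys the invariance of $y=0$, so arcs crossing it become admissible, and the bifurcation of the symmetric family is governed by a one-variable displacement (Melnikov-type) function whose simple zeros are exactly the new crossing cycles. Selecting $r^\pm$ so that this function has at least $n_k$ simple zeros — feasible because its admissible degree grows like $2n_k$ — yields at least $n_k$ new hyperbolic crossing cycles, while the $2c_k$ lifted ones, being hyperbolic, persist for $\e$ small. Thus $Z_{k+1}$ (the folded field perturbed by $\e R$) has at least $2c_k+n_k=c_{k+1}$ hyperbolic crossing cycles.

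The main obstacle I anticipate is precisely this last count: one must show that breaking the symmetry of the degenerate family $\{(y,-y)\}$ produces at least $n_k$ genuine crossing cycles — that is, solutions of the perturbed matching system for which both connecting arcs exist and meet $\Sigma$ transversally — and bound their number from below through the zero count of the associated Melnikov function. This demands a careful analysis near $y=0$, where the unperturbed connecting arcs degenerate against the invariant line, together with a verification that a single perturbation can simultaneously realize the required number of simple zeros, keep the degree at most $n_{k+1}$, and preserve the Hamiltonian structure on each side. By comparison, the remaining ingredients — the base case, the divergence and degree bookkeeping, and the persistence of the hyperbolic lifted cycles — are routine.
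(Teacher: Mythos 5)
Your strategy is essentially the paper's: the same recursion $n_{k+1}=2n_k+1$, $c_{k+1}=2c_k+n_k$ with seed $(n_0,c_0)=(2,1)$; the same fold $\Phi(x,y)=(x,y^2-B)$ applied to the Hamiltonians on each side of $\Sigma$ (the paper fixes $B=2$); the same doubling of the $c_k$ hyperbolic cycles through the two diffeomorphic preimages in $\{y>0\}$ and $\{y<0\}$; and the same symmetry-breaking Hamiltonian perturbation in $y$ only, whose first-order (Melnikov) function is governed by the odd part of the perturbation on $\Sigma$ — a polynomial of degree $d_k-1=n_k$ in $y^2$ — yielding the $n_k$ extra cycles around the origin. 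The degree bookkeeping and the persistence of the lifted hyperbolic cycles are handled exactly as you describe.

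Two corrections, one of which is the substantive gap. First, $\{y=0\}$ is \emph{not} invariant for the folded half-fields: the folded field is $\big(2y\,P^\pm(x,y^2-B),\,Q^\pm(x,y^2-B)\big)$, which on $\{y=0\}$ is vertical, so orbits cross that line transversally. Consequently the symmetric family $\{(y,-y)\}$ is not a spurious solution of the matching equations; it corresponds to a genuine period annulus of crossing periodic orbits surrounding the origin (visible in the paper's level-curve figures for $H_1$, $H_2$), and the $n_k$ new cycles bifurcate from it by a standard simple-zero argument — there is no delicate degeneration of connecting arcs against an invariant line to analyze. Second, the step you flag as the main obstacle is where the paper's technical work actually lives, and your induction hypothesis (``all crossing cycles hyperbolic'') is not strong enough to run it. In the paper the cycles of $Z_{k,\e,E_k^*}$ exist only for $\e\in(0,\hat\e]$ and collapse onto periodic orbits of $Z_{k,0,E_k^*}$ as $\e\to 0$; to lift them through the next fold \emph{and} simultaneously perform the Melnikov analysis in the new parameter $\e_{k+1}$ with ranges uniform in $\e$, Lemma~\ref{lemaaux2} carries the stronger condition $(A_k)$ (namely $\partial_\e\delta^i(\check y_i,0)=0$ and $\partial^2_{y\e}\delta^i(\check y_i,0)\neq 0$) through the induction and applies the implicit function theorem to $\delta/\e$ jointly in $(\e,\e_{k+1})$. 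It also needs the arithmetic fact that the iterates $\phi^i$, $\phi(y)=y^2-2$, do not vanish on the chosen intervals $I_{k+1}$, which keeps $\partial F^\pm_{k+1}/\partial y_2=-2^k\prod_{i=1}^k\phi^i(y)$ nonzero and hence the half-return maps well defined on the whole annulus where the $n_k$ simple zeros must be placed. Your plan, which freezes a single field at each stage and invokes hyperbolicity plus an unspecified ``careful analysis near $y=0$'', omits exactly these ingredients; they are what turns the heuristic count $2c_k+n_k$ into a proof.
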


\subsection{Construction of the sequence of vector fields} The sequence of vector fields of Proposition~\ref{prop:aux} will be constructed inductively. Lemma~\ref{lemaaux1} below provides the first step of the induction, while Lemma~\ref{lemaaux2} establishes the inductive step.

Consider the perturbed polynomial Hamiltonian function of degree $d_0=3$,
\begin{equation}\label{eq:h0}
	H^{\pm}_{0}(x,y, \e) = 
	\frac{(\pm x-1)^2-y^2}{2}+\e P_0 ^{\pm}(y), 
\end{equation}
where $P_0 ^{\pm} =  a_{0,0} ^{\pm} + a_{0,1} ^{\pm}y +  a_{0,2} ^{\pm}y^2 + a_{0,3}^{\pm}y^3$ and $\e>0$ is a small parameter. The corresponding piecewise polynomial Hamiltonian vector field of degree $n_0=2$ is defined by
\begin{equation}\label{eq: z0}
	Z_{0,\e} = \left\{\begin{array}{ll}
		Z^+_{0,\e}\ \mbox{,}\ \mbox{if}\ x>0\\
		Z^-_{0,\e}\ \mbox{,}\ \mbox{if}\ x<0,
	\end{array}\hspace{0.5cm} \mbox{where}\right. \hspace{0.5 cm} Z^{\pm}_{0,\e} = \left(\frac{\partial}{\partial y} H_0^{\pm} (x,y,\e), -\frac{\partial}{\partial x} H_0^{\pm}(x,y,\e) \right).
\end{equation}

\begin{lemma}\label{lemaaux1}
	There exists coefficients of $P_0$ and $\bar{\e}>0$ such that $Z_{0,\e}$ has a hyperbolic crossing limit cycle contained in $\mathbb{R}\times(-2,2)$ for every $\e\in(-\bar{\e},\bar{\e})$.
\end{lemma}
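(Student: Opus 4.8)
The plan is to regard $Z_{0,\e}$ as an $\e$-perturbation of the unperturbed piecewise field $Z_{0,0}$ and to produce the cycle as a simple zero of a displacement function. First I would analyze $Z_{0,0}$: on each side it is linear Hamiltonian, with a saddle at $(\pm1,0)$ and level curves $(x\mp1)^2-y^2=\mathrm{const}$. For $s\in(0,1)$ the level arc through $(0,\pm s)$ bulges into the appropriate half-plane and meets the $y$-axis again at $(0,\mp s)$; since the $x$-components of $Z^{+}_{0,0}$ and $Z^{-}_{0,0}$ agree in sign at these points, they are transversal crossing points. Hence $Z_{0,0}$ possesses an entire period annulus of crossing periodic orbits $\gamma_s$, each lying in $|y|\le s<1\subset(-2,2)$ and symmetric under $y\mapsto-y$. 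The task is therefore a bifurcation-from-a-period-annulus problem.

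The key simplification is that, for each fixed $\e$, both halves of $Z_{0,\e}$ remain Hamiltonian with $H_0^{\pm}$, so the two half-return maps $\rho^{\pm}_\e$ are determined \emph{implicitly} by the level conditions $H_0^{\pm}(0,\rho^{\pm}_\e(s))=H_0^{\pm}(0,s)$, with $H_0^{\pm}(0,y)=\tfrac{1-y^2}{2}+\e P_0^{\pm}(y)$; no integration of the flow is required. Writing $\rho^{\pm}_\e(y)=-y+\e\,\sigma^{\pm}(y)+\CO(\e^2)$ and substituting gives $\sigma^{\pm}(y)=\bigl(P_0^{\pm}(y)-P_0^{\pm}(-y)\bigr)/y=2\bigl(a_{0,1}^{\pm}+a_{0,3}^{\pm}y^2\bigr)$, i.e.\ twice the odd part of $P_0^{\pm}$ divided by $y$. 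Composing $\Pi_\e=\rho^+_\e\circ\rho^-_\e$, the displacement function $\delta_\e(s)=\Pi_\e(s)-s$ expands as
\[
\delta_\e(s)=\e\,M(s)+\CO(\e^2),\qquad M(s)=\sigma^{+}(-s)-\sigma^{-}(s)=2\bigl[(a_{0,1}^{+}-a_{0,1}^{-})+(a_{0,3}^{+}-a_{0,3}^{-})\,s^{2}\bigr].
\]

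With $M$ in hand the conclusion is routine. I would pick coefficients so that $M$ has a simple zero at a prescribed $s_0\in(0,1)$; for instance any choice with $a_{0,3}^{+}\neq a_{0,3}^{-}$ and $a_{0,1}^{\pm}=-a_{0,3}^{\pm}s_0^2$ yields $M(s)=2(a_{0,3}^{+}-a_{0,3}^{-})(s^2-s_0^2)$, so $M(s_0)=0$ and $M'(s_0)=4s_0(a_{0,3}^{+}-a_{0,3}^{-})\neq0$. Applying the implicit function theorem to $\delta_\e(s)/\e=M(s)+\CO(\e)$ produces, for every sufficiently small $\e\neq0$, a zero $s_*(\e)\to s_0$ with $\delta_\e'(s_*(\e))\neq0$, i.e.\ $\Pi_\e'(s_*(\e))\neq1$; this is exactly a hyperbolic crossing limit cycle, and since $s_*(\e)\to s_0<1$ it lies in $\R\times(-2,2)$. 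Moreover the normalization $a_{0,1}^{\pm}=-a_{0,3}^{\pm}s_0^2$ makes $(0,\pm s_0)$ satisfy both level conditions for \emph{all} $\e$, so the same persistent symmetric orbit carries the cycle throughout the interval (degenerating into the annulus only at $\e=0$). Transversality of the crossings, hence the crossing (non-sliding) character, persists because the signs of the $x$-components near $(0,\pm s_0)$ are unchanged for small $\e$.

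The main obstacle is the degeneracy at $\e=0$: there the two level conditions coincide, so the naive system of two equations in $(a,b)$ has the whole curve $b=-a$ as solutions and the implicit function theorem cannot be invoked directly. This forces the Lyapunov--Schmidt/Melnikov reduction to the single scalar function $M(s)$, and it also explains why the two sides must carry \emph{different} perturbations: if $P_0^{+}$ and $P_0^{-}$ had equal odd parts then $M\equiv0$ and the annulus would not break at first order, so $a_{0,3}^{+}\neq a_{0,3}^{-}$ (ensuring $M\not\equiv0$ with a simple zero in $(0,1)$) is essential. A secondary check is that the perturbed orbit arcs remain in their respective half-planes and return transversally to the $y$-axis, which follows by continuity from the explicit hyperbola-arc picture of $Z_{0,0}$ and keeps the cycle inside the strip $|y|<2$.
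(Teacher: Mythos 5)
Your proposal is correct and follows essentially the same route as the paper: both exploit that the half-return maps are determined implicitly by the level conditions of $H_0^{\pm}(0,\cdot,\e)$, arrive at the identical first-order Melnikov function $\mathcal{M}_0(y)=2\bigl(A_{0,1}+A_{0,3}y^2\bigr)$, choose coefficients producing a simple zero in $(0,1)$, and conclude by the Implicit Function Theorem applied to $\delta_\e/\e$. The only (cosmetic) differences are that you compose the two half-return maps into a full return map where the paper takes the difference of a forward and a backward half-return map, and your normalization $a_{0,1}^{\pm}=-a_{0,3}^{\pm}s_0^2$ pins the cycle exactly at $(0,\pm s_0)$ for all $\e$, a small bonus not present in the paper.
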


\begin{proof}
Since $Z^{+}_{0,\e}$ and $Z^{-}_{0,\e}$ are Hamiltonian vector fields, their orbits lie in the level sets of the Hamiltonians $H^{+}_0$ and $H^{-}_0$, respectively. Thus,  define  $F_0^{\pm} : [0,1]\times[-1,0]\times[-l,l] \rightarrow \mathbb{R}$ by  
\begin{equation*}
\begin{aligned}
F_0^{\pm}(y_1, y_2, \e ) 
&= \frac{H^{\pm}_0 (0, y_1, \e) - H^{\pm}_0 (0, y_2, \e)}{y_1 - y_2}\\
&= -\frac{y_1 + y_2}{2} + \e\!\left(a^{\pm}_{0,1} + a^{\pm}_{0,2}(y_1 + y_2) + a^{\pm}_{0,3}(y_1^2 + y_1y_2 + y_2^2)\right).
\end{aligned}
\end{equation*}

Observe that, for each $y \in [0,1]$, the point $(y, -y, 0)$ satisfies  
\[
F_0^{\pm}(y, -y, 0) = 0 
\quad \text{and} \quad 
\frac{\partial F_0^{\pm}}{\partial y_2}(y, -y, 0) = -\frac{1}{2} \neq 0.
\]
Hence, by the Implicit Function Theorem and the compactness of $[0,1]$, there exist $\tilde{\e},\eta > 0$ small and a unique function  
\[
y_1^{\pm}: [0,1]\times(-\tilde{\e},\tilde{\e}) \rightarrow (-2,\eta)
\]
such that  
\begin{equation*} \label{eq:tf1}
y_1^{\pm}(y,0) = -y,
\quad\text{and}\quad
F_0^{\pm}\!\left(y, y_1^{\pm}(y, \e), \e\right) = 0,
\quad \forall (y,\e)\in [0,1]\times(-\tilde{\e},\tilde{\e}).
\end{equation*}
Moreover,
\[
\dfrac{\partial y_1^{\pm}}{\partial \e}(y, 0) = 2\!\left(a^{\pm}_{0,1} + a^{\pm}_{0,3}y^2\right), \quad \forall\, y\in[0,1].
\]
Note that the points $(0, y_1^{-}(y,\e))$ and $(0, y_1^{+}(y,\e))$ are, respectively, the return points on the $y$-axis of the trajectories of $Z_{0,\e}^{-}$ and $Z_{0,\e}^{+}$ passing through $(0,y)$ in forward and backward time.

Now, define the \emph{displacement function}
\[
\de_{0} : (0,1)\times(-\tilde{\e},\tilde{\e}) \rightarrow \mathbb{R}, \quad
\de_0(y, \e) := y_1^{+}(y,\e) - y_1^{-}(y,\e),
\]
whose zeros correspond to periodic orbits of $Z_{0,\e}$. Furthermore, isolated (resp. simple) zeros of $\de_0$ correspond to (resp. hyperbolic) crossing limit cycles of $Z_{0,\e}$. 
 Notice that
\begin{equation}\label{eq:disp0}
\de_0(y, \e)= \e\,\mathcal{M}_0(y) + \mathcal{O}(\e^2),\quad\text{where}\quad \mathcal{M}_0(y) =2(A_{0,1} + A_{0,3}y^2),
\end{equation}
and $A_{0,i} = a^{+}_{0,i} - a^{-}_{0,i}$ for $i = 1,3$. Thus, consider $\Delta_0 : [0, 1
   	]\times(-\tilde{\e},\tilde{\e}) \rightarrow \mathbb{R}$ defined by:
\[
\Delta_0(y, \e) := \frac{\de(y,\e)}{\e} = \mathcal{M}_0(y)+ \mathcal{O}(\e).
\]
There are coefficients $a_{0,i}^{\pm}$, $i=0,\ldots,3$, for which $\mathcal{M}_0$ has a unique simple zero $y_0\in (0,1)$. For these coefficients,
   	\begin{equation}\label{eq: A0}
   		\Delta_0(y_0, 0) =  0\ \ \mbox{and}\ \ \frac{\partial\Delta_0
   		}{\partial y} (y_0,0)\neq 0,
   	\end{equation}
and, therefor, the Implicit Function Theorem provides the existence of $\bar{\e}\in(0,\tilde{\e})$ and a unique function $y^*:(-\bar{\e},\bar{\e})\to (0,1)$ such that 
$y^*(0)=y_0$ and   $\Delta_0(y^*(\e), \e) = 0$ for all $\e\in(-\bar{\e},\bar{\e}).$
   	
Hence, the displacement function~\eqref{eq:disp0} admits a branch $y^*(\e)$ of simple zeros, and consequently $Z_{0,\e}$ possesses a hyperbolic crossing limit cycle for $\e \in (-\bar{\e}, \bar{\e})\setminus\{0\}$. Furthermore, $\bar{\e}>0$ can be chosen sufficiently small so that, since $y^{\pm}(y^*(\e), \e) > -2$, the corresponding limit cycle lies entirely within $\mathbb{R} \times (-2, 2)$.
\end{proof}

Now, starting from the polynomial Hamiltonian function $ H_0 $ of degree $ d_0 = 3 $ given in \eqref{eq:h0}, we recursively define a sequence of Hamiltonian functions $ (H_k)_{k \in \mathbb{N}} $. To this end, consider the singular transformation
\begin{equation*}\label{eq: Phi}
    \Phi : (x, y) \mapsto (x, y^2 - 2).
\end{equation*}
For each $ k \in \mathbb{N} $, we define a polynomial Hamiltonian function $ H_k^{\pm} $ of degree $ d_k = 2 d_{k-1} $ by
\begin{equation}\label{eq:Hk}
    H_k^{\pm}(x, y, \varepsilon, E_k) = H_{k-1}^{\pm} \bigl( \Phi(x, y), \varepsilon, E_{k-1} \bigr) + \varepsilon \, \varepsilon_k \, P_k^{\pm}(y),
\end{equation}
where $ E_k = (\varepsilon_1, \ldots, \varepsilon_k) \in \mathbb{R}^k $ and
\[
    P_k^{\pm}(y) = \sum_{j=0}^{d_k} a_{k,j} y^j.
\]

Finally, for each $k\in\mathbb{N}$, define the piecewise polynomial Hamiltonian vector field of degree $n_k:=d_k-1$ by
\begin{equation}\label{eq: zk}
		Z_{k,\e,E_k} = \left\{\begin{array}{ll}
		Z^+_{k,\e,E_k}\ \mbox{,}\ \mbox{if}\ x>0\\
		Z^-_{k,\e,E_k}\ \mbox{,}\ \mbox{if}\ x<0,
	\end{array}\right.\,\,\mbox{where} \,\, Z^{\pm}_{k,\e,E_k} = 
	\left(
	\frac{\partial H^{\pm}_k}{\partial y}(\cdot,  \e,E_k),
	-\,\frac{\partial H^{\pm}_k}{\partial x}(\cdot,\e,E_k)
	\right).
\end{equation}

It is important to mention that the transformation $\Phi(x,y) = (x, y^2 - 2)$ is not a diffeomorphism on the whole plane $\mathbb{R}^2$. Nevertheless, the restrictions $\Phi|_{\mathbb{R}\times (0,2)} : \mathbb{R}\times(0,2) \to \mathbb{R}\times(-2, 2)$ and $\Phi|_{\mathbb{R}\times (-2,0)} : \mathbb{R}\times(-2,0) \to \mathbb{R}\times(-2, 2)$ are diffeomorphisms. Consequently, the vector field $Z_{k+1,\e,E_{k+1}}|_{\e_{k+1}=0}$, when restricted to either $\mathbb{R}\times(0,2)$ or $\mathbb{R}\times(-2,0)$, is equivalent to $Z_{k,\e,E_k}$ restricted to $\mathbb{R}\times(-2,2)$, for all $k\in\mathbb{N}$ (see \autoref{fig:comportamentopelaphi}).

\begin{figure}[!htb]
	\centering
	\includegraphics[scale=0.45]{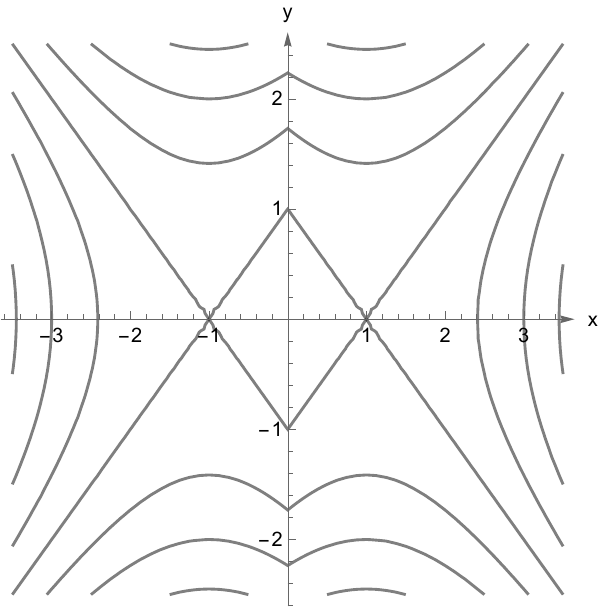}
	\includegraphics[scale=0.45]{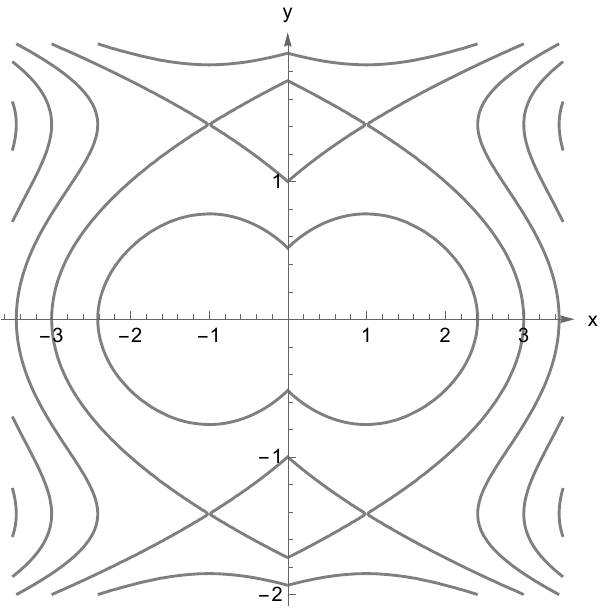}
	\includegraphics[scale=0.45]{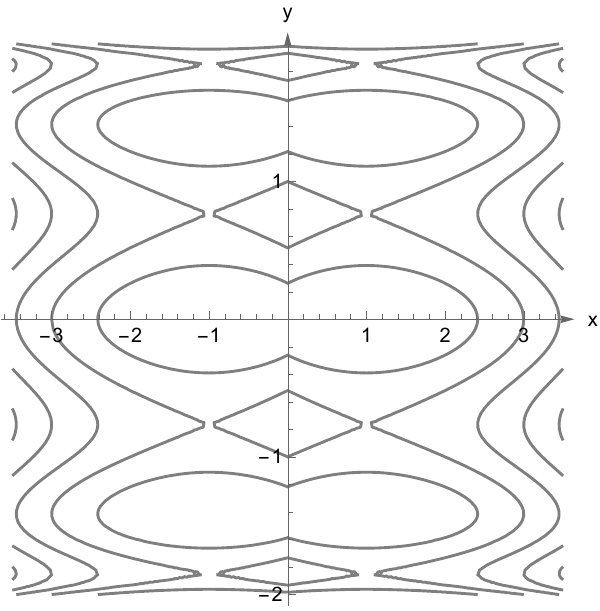}
	\caption{Level curves of $H_0$, $H_1$, and $H_2$, for $\e = 0$.}\label{fig:comportamentopelaphi}
\end{figure} 

Before proceeding to the next result, which ensures the prescribed number of crossing limit cycles for each $Z_{k,\e,E_k}$, we first establish the inductive hypothesis.

Assume that, for a fixed $E^*_k = (\e_1^*, \dots, \e_k^*) \in \mathbb{R}^k$ and every $\e \in (0, \hat{\e}]$, the vector field $Z_{k, \e, E^*_k}$ possesses $c_k$ hyperbolic crossing limit cycles contained in $\mathbb{R} \times (-2, \infty)$. For each $\e \in (0, \hat{\e}]$, denote by $(0, y_i(\e))$ and $(0, z_i(\e))$ the intersection points of the $i$-th limit cycle with the $y$-axis, where $y_i(\e) > z_i(\e)$ and $y_i(\e), z_i(\e) \neq 0$ for $i\in\{1, \dots, c_k\}$.  Define $ y_i(0) := \check{y}_i$ and assume that $\check{y}_i \neq 0$ for all $ i\in\{1, \dots, c_k\}$. This assumption ensures that each of these limit cycles converges, as $\e \to 0$, to a periodic orbit of $ Z_{k,0,E^*_k}$. Such a condition will be explicitly incorporated into our inductive hypothesis below. Accordingly, for each $ i\in\{1, \dots, c_k\}$, consider the displacement function defined in a neighborhood $ V_i$ of $\check{y}_i$, $\delta^i: V_i \rightarrow \mathbb{R}$, by
\begin{equation}\label{deltai}
	\delta^i(y,\e)= R^+_i(y,\e)-R^-_i(y,\e)
\end{equation} 
where $R_i^{\pm}(y, \e)$ denotes the half-return point to the $y$-axis, in forward or backward time respectively, of the orbit of $Z_{k,\e,E^*_k}$ passing through $(0, y)$. Hence, for all $\e \in [0, \hat{\e}]$, $\delta^i(y_i(\e), \e) = 0.$

We say that the $c_k$ hyperbolic crossing limit cycles of $Z_{k,\e,E_k^*}$ satisfy the hypothesis $(A_k)$ provided that
\[
\tag{$A_k'$}
	\check{y}_i \neq 0
	\quad \text{and} \quad
	\check{y}_i \neq \check{y}_j \ \text{for} \ i \neq j,
\]
and
\[
\tag{$A_k''$}
	\frac{\partial \delta^i}{\partial \e}(\check y_i, 0) = 0,
	\quad \text{and} \quad
	\frac{\partial^2 \delta^i}{\partial y \partial \e}(\check y_i, 0) \neq 0.
\]

Notice that condition $(A_k'')$ ensures that each one of these $c_k$ limit cycles converges to a distinct periodic orbit of $Z_{k,0,E^*_k}$. Moreover, condition $(A_k'')$ implies that, for each $i \in\{ 1, \dots, c_k\}$, $\check y_i$ is a simple zero of $\frac{\partial \delta^i}{\partial \e}(\cdot,0)$.

\begin{remark}\label{rem:firststep}
The relationship \eqref{eq: A0} implies that the hyperbolic limit cycle of $Z_{0, \e}$ provided by Lemma \ref{lemaaux1} satisfies hypothesis $(A_0)$.
\end{remark}

\begin{lemma} \label{lemaaux2}
Assume that there exists $E_k^*\in \mathbb{R}^{k}$ and $\hat{\e}>0$ such that,  for every $\e\in(0,\hat\e]$, the piecewise vector field $Z_{k,\e, E^*_k}$ has $c_k$ hyperbolic crossing limit cycles contained in $\mathbb{R}\times(-2,\ 2)$ and satisfying condition $(A_k)$. Then, there exist coefficients of $P^{\pm}_{k+1}$, $\bar{\e}\in(0,\hat{\e}],$ and $\bar{\e}_{k+1}> 0$ such that,  for every $\e\in (0,\bar{\e}]$ and $\e_{k+1} \in (0, \bar{\e}_{k+1})$, the piecewise vector field $Z_{k+1,\e, (E_k^*,\e_{k+1})}$ has $c_{k+1}= 2c_{k}+ d_k -1 $  hyperbolic crossing limit cycles contained in $\mathbb{R}\times(-2,\ 2)$ and satisfying condition $(A_{k+1})$.
\end{lemma}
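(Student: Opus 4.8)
The plan is to run the Christopher--Lloyd doubling through the singular map $\Phi(x,y)=(x,y^2-2)$, handling the two parameters $\e$ and $\e_{k+1}$ hierarchically. First I would freeze $\e_{k+1}=0$, where $H_{k+1}^{\pm}(x,y,\e,(E_k^*,0))=H_k^{\pm}(\Phi(x,y),\e,E_k^*)$. Since $\Phi$ fixes the switching line $\{x=0\}$ and restricts to diffeomorphisms of each strip $\R\times(0,2)$ and $\R\times(-2,0)$ onto $\R\times(-2,2)$, each of the $c_k$ hyperbolic crossing limit cycles of $Z_{k,\e,E_k^*}$ provided by the hypothesis lifts to one crossing limit cycle in the upper strip and one in the lower strip, with conjugated half-return maps; hyperbolicity is preserved, giving $2c_k$ hyperbolic crossing limit cycles. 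By $(A_k)$ the level-$k$ crossing heights converge, as $\e\to0$, to values $\check y_i^{(k)}\in(-2,2)$ bounded away from $-2$, so the lifted heights $\pm\sqrt{\check y_i^{(k)}+2}$ are bounded away from the fold $\{y=0\}$.

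The decisive point is that on the switching line the lifted Hamiltonians $H_{k+1}^{\pm}(0,y,\e,(E_k^*,0))=H_k^{\pm}(0,y^2-2,\e,E_k^*)$ are \emph{even} in $y$. Consequently $(0,y)$ and $(0,-y)$ always share a common level of both $H_{k+1}^{+}$ and $H_{k+1}^{-}$, so the lifted field carries a period annulus of crossing periodic orbits through the pairs $\{(0,y),(0,-y)\}$ accumulating on the fold. The role of $\e\,\e_{k+1}P_{k+1}^{\pm}(y)$ is exactly to break this $y\mapsto-y$ symmetry and extract isolated crossing limit cycles from this annulus. Following Lemma~\ref{lemaaux1}, I would define the displacement $\de_{k+1}(y,\e,\e_{k+1})=R^+(y)-R^-(y)$ from the level relations $H_{k+1}^{\pm}(0,y)=H_{k+1}^{\pm}(0,R^{\pm})$; it vanishes at $\e=0$, so $\de_{k+1}=\e\,\mathcal M_{k+1}(\cdot,\e_{k+1})+\CO(\e^2)$ with $\mathcal M_{k+1}$ affine in $\e_{k+1}$.

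On the annulus (where $R^{\pm}(y)=-y+\CO(\e)$) the even, $\e_{k+1}$-independent part of $\mathcal M_{k+1}$ cancels, and a direct expansion leaves $\mathcal M_{k+1}(y,\e_{k+1})=\e_{k+1}\,\dfrac{2\,Q(y^2)}{g(y)}$, where $g$ is a fixed factor coming from the unperturbed Hamiltonian on the axis and $Q$ is the polynomial in $y^2$ of degree $(d_{k+1}-2)/2=d_k-1$ obtained from the odd part of $P^+_{k+1}-P^-_{k+1}$; the free coefficients $A_{k+1,j}:=a^+_{k+1,j}-a^-_{k+1,j}$ ($j$ odd) let me prescribe $Q$ arbitrarily, exactly as $\mathcal M_0=2(A_{0,1}+A_{0,3}y^2)$ in Lemma~\ref{lemaaux1}. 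I would then choose these coefficients so that $Q$ has $d_k-1$ distinct simple roots $y^2=r_1,\dots,r_{d_k-1}$ lying in a subinterval of the annulus on which $g$ has no zeros, all distinct from the lifted heights. The Implicit Function Theorem yields $d_k-1$ new hyperbolic crossing limit cycles near the fold for all small $\e,\e_{k+1}>0$, while the $2c_k$ lifted cycles persist by hyperbolicity; together they give $2c_k+(d_k-1)=c_{k+1}$ crossing limit cycles, all contained in $\R\times(-2,2)$.

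It remains to check $(A_{k+1})$. Since $\de_{k+1}$ vanishes at $\e=0$, one has $\partial_\e\de^i(\check y_i,0)=\mathcal M_{k+1}(\check y_i,\e_{k+1})$ and $\partial^2_{y\e}\de^i(\check y_i,0)=\partial_y\mathcal M_{k+1}(\check y_i,\e_{k+1})$, so $(A_{k+1}'')$ is precisely the requirement that each $\check y_i$ be a simple zero of $\mathcal M_{k+1}$. This holds for the new cycles by construction (simple roots of $Q$) and for the lifted cycles because their heights are simple zeros inherited from $(A_k'')$ through the chain rule $\partial_y[\mathcal M_k(y^2-2)]=2y\,\mathcal M_k'(y^2-2)\neq0$, persisting for $\e_{k+1}$ small; distinctness and nonvanishing, i.e. $(A_{k+1}')$, are arranged by the placement above. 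The main obstacle I anticipate is the annulus computation together with the realizability step: I must confirm that the odd coefficients of $P_{k+1}^{\pm}$ really provide a full-rank, pole-free family on a sufficiently large portion of the annulus so that $d_k-1$ simple zeros can be created there, and that this is compatible, for a common range of $\e$ and $\e_{k+1}$, with the hyperbolicity and nondegeneracy of the $2c_k$ lifted cycles.
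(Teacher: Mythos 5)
Your overall architecture coincides with the paper's: lift the $c_k$ cycles through the two strips of $\Phi(x,y)=(x,y^2-2)$ to get $2c_k$ cycles, observe that at $\e_{k+1}=0$ the Hamiltonians $H^{\pm}_{k+1}(0,\cdot)$ are even in $y$ so that a symmetric annulus of crossing periodic orbits surrounds the fold, and break it with the odd coefficients of $P^{\pm}_{k+1}$, whose first-order Melnikov contribution is $\e_{k+1}$ times a free polynomial of degree $d_k-1$ in $y^2$ over a fixed denominator $\prod_{i=1}^{k+1}\phi^{i}(y)$; choosing $d_k-1$ simple roots yields the extra cycles. That part, including the count $2c_k+d_k-1$ and the verification of $(A_{k+1})$, matches the paper. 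The pole-avoidance worry you raise at the end is harmless: the denominator has finitely many zeros, the paper simply restricts to an explicit interval $I_{k+1}$ on which $\phi^i(y)\neq 0$ for $i=1,\dots,k+1$, and the $d_k$ free differences $A_j=a^+_{k+1,2j-1}-a^-_{k+1,2j-1}$ let you place the simple roots there.

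The genuine gap is your treatment of the $2c_k$ lifted cycles, which you dispose of with ``persist by hyperbolicity'' and then flag as an unresolved obstacle. The lemma requires a \emph{common} parameter rectangle $(0,\bar\e]\times(0,\bar\e_{k+1})$. Pointwise hyperbolicity only gives, for each fixed $\e$, a threshold for $\e_{k+1}$, and since the cycles of $Z_{k,\e,E^*_k}$ collapse onto a period annulus as $\e\to0$ (the derivative of their displacement at the fixed point is of order $\e$), that threshold a priori shrinks to zero with $\e$. The paper resolves exactly this point by writing the lifted displacement as
\begin{equation*}
\tilde{\delta}^i(y,\e,\e_{k+1})
=\e\,\frac{\dfrac{\partial\delta^{i}}{\partial\e}(y^2-2,0)}{2\sqrt{R^{+}_{i}(y^2-2,0)+2}}+\mathcal{O}(\e\,\e_{k+1})+\mathcal{O}(\e^2),
\end{equation*}
dividing by $\e$, and applying the Implicit Function Theorem at $(\tilde y_i,0,0)$ jointly in $(\e,\e_{k+1})$; it is precisely condition $(A_k'')$ --- which you invoke only when checking $(A_{k+1})$ --- that makes $\tilde y_i$ a simple zero of the rescaled function and hence delivers the uniform rectangle. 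In other words, the inductive hypothesis $(A_k)$ is not bookkeeping for the next step: it is the mechanism by which the doubled cycles survive uniformly, and without carrying out this rescaled IFT your count of $2c_k$ persisting cycles is not justified as stated.
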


\begin{proof}
Suppose that, for a fixed parameter vector  $E^*_k = (\varepsilon^*_{1}, \dots, \varepsilon^*_{k}) \in \mathbb{R}^k$ 
and for every $\varepsilon \in (0, \hat{\varepsilon})$,  the vector field $Z_{k, \varepsilon, E^*_k}$ possesses $c_k$ hyperbolic crossing limit cycles  contained in $\mathbb{R} \times (-2, \infty)$ and satisfying condition $(A_k)$.  Recall that, as $\varepsilon \to 0$, each of these limit cycles, whose largest intersection point with the $y$-axis is denoted by $(0, y_i(\varepsilon))$, converges to a distinct periodic orbit of $Z_{k, 0, E^*_k}$. In particular, $y_i(0) = \check{y}_i \neq 0$ and $\check{y}_i \neq \check{y}_j$ for $i \neq j$ (see condition $(A_k')$). Under the transformation $\Phi(x, y) = (x, y^2 - 2)$, this intersection corresponds to two points in the phase space of $Z_{k+1, \varepsilon, (E^*_k, 0)}$, namely 
$(0, -\tilde{y}_i)$ and $(0, \tilde{y}_i)$, where $\tilde{y}_i := \sqrt{\check{y}_i + 2}$. These points lie, respectively, on $\mathbb{R} \times (-2,0)$ and $\mathbb{R} \times (0,2)$. 

Let us analyze the point $(0, \tilde{y}_i)$.  The analysis for $(0, -\tilde{y}_i)$ is completely analogous.  
Notice that there exists a small neighborhood $\widetilde{V}_i \subset \mathbb{R}_+$ of $\tilde{y}_i$ such that, for $\varepsilon_{k+1}$ sufficiently small, the vector field $Z_{k+1,\varepsilon,(E_k^*,\varepsilon_{k+1})}$ admits a well-defined displacement function on $\widetilde{V}_i$. Moreover, taking \eqref{eq:Hk} into account, this displacement function can be written as
\begin{equation*}\label{tildedisp}
	\tilde{\delta}^i(y, \varepsilon, \varepsilon_{k+1}) 
	= \widetilde{R}^{+}_i(y, \varepsilon) - \widetilde{R}^{-}_i(y, \varepsilon) 
	+ \mathcal{O}(\varepsilon \, \varepsilon_{k+1}),
\end{equation*}
where $\widetilde{R}^{\pm}_i(y, \varepsilon)$ denote the return points on the $y$-axis of the orbits of 
$Z^{\pm}_{k+1, \varepsilon, (E^*_k, 0)}$ passing through $(0, y)$, for $y \in \widetilde{V}_i$.  
Moreover, $$\widetilde{R}^{\pm}_i(y, \varepsilon)=\sqrt{R^{\pm}_{i}(y^2 - 2, \varepsilon) + 2},$$ where $R_i^{\pm}(y, \varepsilon)$ are the half-return maps defined in a neighborhood of $\check{y}_i$ by the orbits of $Z_{k, \varepsilon, E^*_k}$ passing through $(0, y)$, as established in \eqref{deltai}. Hence,
\begin{equation*}\label{deltatil}
	\tilde{\delta}^i(y, \varepsilon, \varepsilon_{k+1}) 
	= \sqrt{R^{+}_{i}(y^2 - 2, \varepsilon) + 2} 
	- \sqrt{R^{-}_{i}(y^2 - 2, \varepsilon) + 2} 
	+ \mathcal{O}(\varepsilon \, \varepsilon_{k+1}).
\end{equation*}	
Thus, expanding $\tilde{\delta}^i$ at $\e=0$ and taking into account that $R_i^+(y,0)=R_i^-(y,0)$ for $y\in V_i$, we obtain
	\begin{equation*}
		\begin{aligned}
			\tilde{\delta}^i(y,\e, \e_{k+1})& =\ \tilde{\delta}^i(y, 0, \e_{k+1}) + \e \frac{\partial }{\partial \e}\tilde{\delta}^i(y, \e, \e_{k+1})\big|_{\e=0} +\mathcal{O}(\e^2)\\
			&=\e \left( \frac{\ \displaystyle{\frac{\partial\delta^{i}}{\partial\e}}(y^2-2,0)}{2\sqrt{R^{+}_{i}(y^2-2,0)+2}}\right)+ \mathcal{O}(\e\e_{k+1})+\mathcal{O}(\e^2).
		\end{aligned}
	\end{equation*}
	 Consider
	\begin{equation*}
		\tilde{\Delta}^{i}(y,\e, \e_{k+1}) := \frac{\tilde{\delta}^{i}(y, \e, \e_{k+1})}{\e} = \frac{\ \displaystyle{\frac{\partial\delta^{i}}{\partial\e}}(y^2-2,0)}{2\sqrt{R^{+}_{i}(y^2-2,0)+2}} + \mathcal{O}(\e_{k+1})+\mathcal{O}(\e).
	\end{equation*}
Note that, for each $i\in\{1,\ldots,c_k\}$,
	\begin{equation*}
		\vspace{-0.3cm}\tilde{\Delta}^{i}(\tilde{y}_i, 0, 0)= \frac{ \displaystyle{\frac{\partial\delta^{i}}{\partial\e}}(\tilde{y}_i^2-2,0)}{2\sqrt{R^{+}_{i}(\tilde{y}_i^2-2,0)+2}}\stackrel{\tilde{y}_i= \sqrt{\check y_i+2}}{=}\frac{ \overbrace{\frac{\partial^2\delta^{i}}{\partial y\partial\e}(y_i,0)}^{\quad\quad\ = 0\ (A_k)}}{2\sqrt{R^{+}_{i}(y_i,0)+2}} = 0,
	\end{equation*}
	and
	\begin{equation*}
		\begin{aligned}
			\frac{\partial\tilde{\Delta}^i}{\partial y}(\tilde{y}_i,0,0)&= \frac{\partial}{\partial y}\left(\frac{\ \displaystyle{\frac{\partial\delta^i}{\partial \e}}(\tilde{y}_i^2-2,0)}{2\sqrt{R^+_i(\tilde{y}_i^2-2,0)+2}}\right) \stackrel{\tilde{y}_i= \sqrt{\check y_i+2}}{=} \frac{\overbrace{\frac{\partial^2 \delta^i}{\partial y\partial\e} (y_i,0)}^{\quad \quad \neq 0\ (A_k)}\cdot\ 2\sqrt{R^{+}_i(y_i,0)+2}}{4(R^+_i(y_i,0)+2)} \neq 0. \\
		\end{aligned}
	\end{equation*}

Accordingly, since the set $\{1,\ldots,c_k\}$ is finite, the Implicit Function Theorem guarantees the existence of constants $\e^*\in(0,\hat \e]$ and $\e_{k+1}^*>0$ such that, for each $i\in\{1,\ldots,c_k\}$, there exists a unique function
\[
y_i^*:[-\e^*,\e^*]\times[-\e_{k+1}^*,\e_{k+1}^*]\longrightarrow (0,2)
\]
satisfying
\[
y_i^*(\e,0)=\tilde y_i
\quad\text{and}\quad
\widetilde{\Delta}^i\big(y_i^*(\e,\e_{k+1}),\e,\e_{k+1}\big)=0,
\]
for $(\e,\e_{k+1})\in[-\e^*,\e^*]\times[-\e_{k+1}^*,\e_{k+1}^*]$. Consequently, $Z_{k+1,\e,(E_k^*,\e_{k+1})}\big|_{\mathbb{R}\times(-2,2)}$ possesses at least $2c_k$ hyperbolic crossing limit cycles satisfying $(A_{k+1})$, for all $\e\in(0,\e^*]$ and $\e_{k+1}\in[0,\e_{k+1}^*]$. This concludes the first part of the proof.

In what follows, we investigate the bifurcation of additional limit cycles surrounding the origin. To this end, first observe that from \eqref{eq:Hk} we get
\begin{equation*}
    H^{\pm}_{k+1}(0,y,\e,E_{k+1})
    = H^{\pm}(0,\phi^{k+1}(y))
    + \e \Big(
        P^{\pm}_0(\phi^{k+1}(y))
        + \e_1\,P^{\pm}_1(\phi^{k}(y))
        + \cdots
        + \e_{k+1} P^{\pm}_{k+1}(y)
    \Big),
\end{equation*}
where $\phi(y) = y^{2} - 2$ denotes the second coordinate function of $\Phi$.
	
	Considering $\xi: \mathbb{R_+} \rightarrow \mathbb{R_+} $ such that $\xi(y) = 2 + \sqrt{y}$, define
	\begin{equation*}
		\begin{aligned}
			  I_1 = [0,1]&\mbox{,}\ \ J_1=[-1, 0],\\
			I_{k}=\left[0,\sqrt{2- \sqrt{\xi^{k-2}(3)}}\right]&\mbox{,}\ \ J_{k}=\left[-\sqrt{2-\sqrt{\xi^{k-2}(3)}}, 0\right]\mbox{,}\ \ k \in \mathbb{N}_{\geq 2}.
		\end{aligned}
	\end{equation*}
    Notice that, for $i=1,\hdots, k$, if $\ov{y}$ is a zero of $\phi^i$, then 
    \[
    \ov{y}= j_1\sqrt{2 + j_2\sqrt{2 + j_3\sqrt{\cdots + j_i\sqrt{2}}}}\,,\quad j_{\ell}\in\{-1,1\},\,\ell\in\{1,\ldots,i\}.
    \] 
     Hence, for all $y \in I_{k}\cup J_{k}$ and $i=1,\hdots, k$,\ $\phi^i(y) \neq 0$.
	
As in the previous lemma, we define  
\[
F_{k+1}^{\pm}: I_{k+1}\times J_{k+1}\times [-\e^*, \e^*] \times [0,\e_{k+1}^*]\rightarrow \mathbb{R}
\]
by
\begin{equation*}\label{f02}
	F_{k+1}^{\pm}(y_1, y_2, \e, \e_{k+1}) 
	= \frac{H^{\pm}_{k+1}(0, y_1, \e, (E^*_{k}, \e_{k+1})) 
	- H^{\pm}_{k+1}(0, y_2, \e, (E^*_{k}, \e_{k+1}))}{y_1 - y_2}.
\end{equation*}

Since $\phi$ is an even function, we have
\[
F^{\pm}_{k+1}(y,-y, 0,\e_{k+1})=0, 
\quad \forall\, y \in I_{k+1},\ \e_{k+1}\in[0,\e_{k+1}^*].
\]
Moreover, by induction on $k$,
\[
\frac{\partial F^{\pm}_{k+1}}{\partial y_2}(y,-y, 0,\e_{k+1})
= -2^k\prod_{i=1}^k \phi^{i}(y)\neq0,
\quad \forall\, y \in I_{k+1},\ \e_{k+1}\in[0,\e_{k+1}^*].
\]
Thus, since $I_{k+1}\times [0,\e_{k+1}^*]$ is compact, the Implicit Function Theorem guarantees the existence of $\tilde{\e}\in(0,\e^*]$, an interval $\tilde{J}_{k+1}$ such that
$J_{k+1}\subset \tilde{J}_{k+1} \subset (-2,\infty),$
and a unique function
\[
y_{1,k+1}^{\pm}: I_{k+1}\times[-\tilde{\e},\tilde{\e}]\times [0,\e_{k+1}^*]\rightarrow \tilde{J}_{k+1}
\]
satisfying
\begin{equation*}\label{eq:yk1}
	y_{1,k+1}^{\pm}(y,0,\e_{k+1})= -y
	\quad \text{and} \quad
	F_{k+1}^{\pm}\bigl(y, y_{1,k+1}^{\pm}(y,\e,\e_{k+1}), \e, \e_{k+1}\bigr)=0,
\end{equation*}
for $(y,\e,\e_{k+1})\in I_{k+1}\times[-\tilde{\e},\tilde{\e}]\times [0,\e_{k+1}^*]$. In addition,
\begin{equation}\label{ep:yqk}
	\frac{\partial y^{\pm}_{1,k+1}}{\partial \e}(y,0,\e_{k+1})
	= \e_{k+1} \frac{ P_{k+1}^{\pm}(y)-P_{k+1}^{\pm}(-y)}{2^{k+1} y\prod_{i=1}^{k+1}\phi^{i}(y)}
	= \e_{k+1}\frac{\sum_{j=1}^{d_k} a^{\pm}_{k+1,\,2j-1}y^{2j-2}}{2^k \prod_{i=1}^{k+1}\phi^{i}(y)},
\end{equation}
where, in the last equality, we have used that $d_{k+1}=2d_k$.

We now define the displacement function
\[
\delta_{k+1}: I_{k+1}\times[-\tilde{\e},\tilde{\e}]\times [0,\e_{k+1}^*]\rightarrow \mathbb{R}
\]
by
\begin{equation*}\label{eq:dk1}
	\delta_{k+1}(y,\e,\e_{k+1})
	:= y^+_{1,k+1}(y,\e,\e_{k+1})-y^-_{1,k+1}(y,\e,\e_{k+1}).
\end{equation*}
Then, from \eqref{ep:yqk}, it follows that
\[
\delta_{k+1}(y,\e,\e_{k+1})
= \e\,\mathcal{M}_{k+1}(y)+\mathcal{O}(\e^2),
\]
where
\[
\mathcal{M}_{k+1}(y)
= \e_{k+1}\frac{\sum_{j=1}^{d_k} A_j\,y^{2j-2}}{2^k \prod_{i=1}^{k+1}\phi^{i}(y)},
\qquad
A_j = a^+_{k+1,\,2j-1}-a^-_{k+1,\,2j-1}.
\]
Notice that we may choose the parameters $A_j$ so that $\mathcal{M}_{k+1}$ has $d_k-1$ simple zeros in $\operatorname{Int}(I_{k+1})$, denoted by $\{y_i \mid i=1,\ldots,d_{k-1}\}.$ Accordingly, we define
\[
\Delta_{k+1}: I_{k+1}\times[-\tilde{\e},\tilde{\e}]\times [0,\e_{k+1}^*]\rightarrow \mathbb{R}
\]
by
\[
\Delta_{k+1}(y,\e,\e_{k+1})
:= \frac{\delta_{k+1}(y,\e,\e_{k+1})}{\e}
= \mathcal{M}_{k+1}(y)+\mathcal{O}(\e).
\]
Since
\[
\Delta_{k+1}(y_i,0,\e_{k+1})=0
\quad \text{and} \quad
\frac{\partial}{\partial \e}\Delta_{k+1}(y_i,0,\e_{k+1})\neq 0,
\]
and since $\{1,\ldots, d_{k-1}\}$ is finite and $[0,\e_{k+1}^*]$ is compact, the Implicit Function Theorem implies the existence of $\bar{\e}\in(0,\tilde{\e})$ and, for each $i\in\{1,\ldots,d_{k-1}\}$, a unique function $y_i^*(\e,\e_{k+1})$ such that
\[
\Delta_{k+1}\bigl(y_i^*(\e,\e_{k+1}), \e, \e_{k+1}\bigr)=0,
\qquad
y_i^*(0,\e_{k+1})=y_i,
\]
for all $\e_{k+1}\in[0,\e_{k+1}^*]$ and $\e\in(0,\bar{\e})$.

Thus, in addition to the $2c_k$ crossing limit cycles that do not surround the origin, the vector field
$Z_{k+1,\e,E_{k+1}}\big|_{\mathbb{R}\times(-2,2)}$ admits $d_k-1$ hyperbolic crossing limit cycles surrounding the origin for all $\e \in (0,\bar{\e})$ and for and $E_{k+1} = (\e^*_1, \hdots, \e^*_{k}, \e_{k+1})$ with $\e_{k+1} \in (0, \e_{k+1}^*]$.
Moreover, these cycles satisfy condition $(A_{k+1})$.
\end{proof}

\subsection{Proof of Proposition \ref{prop:aux}\label{dempropaux}}
Consider the sequence $(Z_k)_{k\in\mathbb{N}}$ of piecewise polynomial Hamiltonian vector fields defined by
\[
Z_0 = Z_{0,\e} \quad \text{and} \quad Z_k = Z_{k,\e,E_k},
\]
as given in \eqref{eq: z0} and \eqref{eq: zk}, respectively.

Now, by Lemmas~\ref{lemaaux1} and~\ref{lemaaux2}, for each $k$ the parameter $\e$ and the parameter vector $E_k$ can be chosen so that $Z_0$ admits $c_0 = 1$ hyperbolic crossing limit cycle, whereas $Z_k$ admits $c_k = 2c_{k-1} + d_{k-1} - 1$ hyperbolic crossing limit cycles. Solving this recurrence relation, we obtain
\[
c_k = 3 \cdot k \cdot 2^{k-1} + 1.
\]

On the other hand, for each $k$ the vector field $Z_k$ has degree $n_k = 2d_{k-1} - 1,$
where $d_k = 2d_{k-1}$ with initial value $d_0 = 3$. Hence,
\[
n_k = 3 \cdot 2^{k} - 1.
\]

This completes the proof of Proposition~\ref{prop:aux}.

\begin{remark}
The estimate provided by Proposition \ref{prop:aux} can actually be sharpened. Indeed, performing a pseudo-Hopf bifurcation at each step (of Lemmas \ref{lemaaux1} and \ref{lemaaux2}) yields $c_0 = 2$ and $c_{k+1}= 2c_{k}+ d_k$, which gives $c_k = 3\cdot k\cdot 2^{k-1} + 2^k$. Nevertheless, this refinement does not lead to any improvement of the growth estimate for $\widehat\CH(n)$ in \eqref{liminfHhat}.
\end{remark}
	
	\section*{Acknowledgments}
	
DDN was supported by the São Paulo Research Foundation (FAPESP), Grant No. 2024/15612-6; by the Conselho Nacional de Desenvolvimento Científico e Tecnológico (CNPq), Grant No. 301878/2025-0; and by the Coordenação de Aperfeiçoamento de Pessoal de Nível Superior - Brasil (CAPES), through the MATH-AmSud program, Grant No. 88881.179491/2025-01. LA was supported by the São Paulo Research Foundation (FAPESP), Grant No. 2025/10287-2.
	
	\bibliographystyle{abbrv}
	
	\bibliography{references.bib}

\end{document}